\definecolor{webgreen}{rgb}{0,.5,0}
\definecolor{webbrown}{rgb}{.6,0,0}
\tikzset{circle node/.style = {circle,inner sep=1pt,draw, fill=white},
        X node/.style = {fill=white, inner sep=1pt},
        dot node/.style = {circle, draw, inner sep=5pt}
        }
\newtheorem{theorem}{Theorem}
\newtheorem{proposition}[theorem]{Proposition}
\newtheorem{corollary}[theorem]{Corollary}
\theoremstyle{definition}
\newtheorem{example}[theorem]{Example}
\DeclareMathOperator{\sech}{sech}
\newcommand{\seqnum}[1]{\href{http://oeis.org/#1}{\underline{#1}}}
\begin{document}

\begin{center}
\vskip 1cm{\LARGE\bf Three \'Etudes on a sequence transformation pipeline} \vskip 1cm \large
Paul Barry\\
School of Science\\
Waterford Institute of Technology\\
Ireland\\
\href{mailto:pbarry@wit.ie}{\tt pbarry@wit.ie}
\end{center}
\vskip .2 in

\begin{abstract} We study a sequence transformation pipeline that maps certain sequences with rational generating functions to permutation-based sequence families of combinatorial significance. Many of the number triangles we encounter can be related to simplicial objects such as the associahedron and the permutahedron. The linkages between these objects is facilitated by the use of the previously introduced $\mathcal{T}$ transform.
\end{abstract}

\section{Introduction}
In this note, we shall define a transformation pipeline beginning with certain sequences possessing a simple ordinary generating function, that combines the inverse Sumudu transform \cite{Belgacem2, Belgacem1, Watugala} (or inverse Laplace Borel transfom) with the reversion of exponential generating functions. Previously, we have studied the invertible $\mathcal{T}$ transform, that uses inversion, the Sumudu transform and reversion, beginning with an exponential generating function \cite{T}.

The sequences and the number triangles that we shall encounter will be referenced, where known, by their $Annnnnn$ number of the On-Line Encyclopedia of Integer Sequences \cite{SL1, SL2}. The comments and references to be found under the $Annnnnn$ numbers of these sequences are an invaluable aid to extending the breadth of this note. A particular theme to be gleaned in reference to many of the sequences and triangles in this note is their association with objects such as the $f$-vectors and the $h$-vectors of simplicial objects such as the associahedra, permutohedra and the stellahedra \cite{Toric}.

We adopt a number of conventions. All number triangles encountered are infinite in extent (downwards and to the right). We only exhibit short truncations of these. On occasion, we use the language of Riordan arrays \cite{Book, Survey, SGWW}. The notation $(g, f)$ signifies an ordinary Riordan array, while $[g,f]$ denotes an exponential Riordan array. Many of the sequences (including polynomial sequences) in this note are examples of moment sequences \cite{Classical, Eulerian, Barry_Moment} associated to families of orthogonal polynomials \cite{Chihara, Gautschi, Szego}. Many have generating functions expressible in continued fraction form \cite{CFT, Wall}. The notation
$$\mathcal{J}(a,b,c,\ldots; \alpha, \beta, \gamma,\ldots)$$ signifies a Jacobi-type continued fraction
$$\cfrac{1}{1- ax-
\cfrac{\alpha x^2}{1-bx-
\cfrac{\beta x^2}{1- cx-
\cfrac{\gamma x^2}{1-\cdots}}}}.$$
Similarly, the notation
$$\mathcal{S}(a,b,c,\ldots;\alpha,\beta, \gamma,\ldots)$$ signifies a Stieltjes-type continued fraction
$$\cfrac{1}{1-
\cfrac{ax}{1-
\cfrac{\alpha x}{1-
\cfrac{bx}{1-
\cfrac{\beta x}{1-\cdots}}}}}.$$ This non-conventional notation will be seen to be useful in the sequel for the patterns that will become apparent.

The Del\'eham notation
$$[r_0,r_1, r_2, \ldots] \,\Delta \, [s_0, s_1, s_2,\ldots]$$ signifies the number triangle whose bi-variate generating function is given by
$$\cfrac{1}{1-
\cfrac{r_0x+s_0xy}{1-
\cfrac{r_1x+s_1xy}{1-
\cfrac{r_2x+s_2xy}{1-\cdots}}}}.$$

The reversal of the triangle (going from $T_{n,k}$ to $T_{n,n-k}$) is then given by
$$[s_0, s_1, s_2,\ldots] \,\Delta\,[r_0,r_1, r_2, \ldots].$$

In addition, we use the notation
$$[r_0,r_1, r_2, \ldots] \,\Delta^{(1)} \, [s_0, s_1, s_2,\ldots]$$ to signify the triangle with generating function
$$\cfrac{1}{1-(r_0x+s_0xy)-
\cfrac{r_1x+s_1xy}{1-
\cfrac{r_2x+s_2xy}{1-\cdots}}}.$$

If $f(x)$ is a generating function with $f(0)\ne 0$, then by its reversion we will mean
 $$\frac{1}{x}\text{Rev}(xf(x)).$$ Operationally, we solve the equation
 $$uf(u)=x$$ and take $u(x)/x$ for the solution that satisfies $u(0)=0$.

We use the notation
$$c(x)=\frac{1-\sqrt{1-4x}}{2x}$$ to denote the generating function of the Catalan numbers $C_n=\frac{1}{n+1}\binom{2n}{n}$ \seqnum{A000108}. Thus $c(x)$ is the reversion of $1-x$ in the sense above. Also in this sense, the reversion of $\frac{1}{1+(r+1)x+rx^2}$ is $\frac{1}{1-(r+1)x}c\left(\frac{rx^2}{(1-(r+1)x)^2}\right)$. Looking at the number triangles that these generating functions expand to, we can say that the Narayana triangle $N_3$, which begins
$$\left(
\begin{array}{ccccccc}
 1 & 0 & 0 & 0 & 0 & 0 & 0 \\
 1 & 1 & 0 & 0 & 0 & 0 & 0 \\
 1 & 3 & 1 & 0 & 0 & 0 & 0 \\
 1 & 6 & 6 & 1 & 0 & 0 & 0 \\
 1 & 10 & 20 & 10 & 1 & 0 & 0 \\
 1 & 15 & 50 & 50 & 15 & 1 & 0 \\
 1 & 21 & 105 & 175 & 105 & 21 & 1 \\
\end{array}
\right),$$ is the reversion of the triangle that begins
$$\left(
\begin{array}{ccccccc}
 1 & 0 & 0 & 0 & 0 & 0 & 0 \\
 -1 & -1 & 0 & 0 & 0 & 0 & 0 \\
 1 & 1 & 1 & 0 & 0 & 0 & 0 \\
 -1 & -1 & -1 & -1 & 0 & 0 & 0 \\
 1 & 1 & 1 & 1 & 1 & 0 & 0 \\
 -1 & -1 & -1 & -1 & -1 & -1 & 0 \\
 1 & 1 & 1 & 1 & 1 & 1 & 1 \\
\end{array}
\right).$$

The ordinary generating functions $g(x)=\sum_{n=0}g_n x^n$ and the exponential generating functions $f(x)=\sum_{n=0} f_n \frac{x^n}{n!}$ that are used in this note depend on the coefficients $g_n$ and $f_n$ only. Thus $x$ is a ``dummy variable''. We variously use $x$, $z$, $t$ for this dummy variable in the note. The variable $r$ is used as a parameter, but also as such a dummy variable in bi-variate expressions. 

The Stirling numbers of the second kind, elements of the exponential Riordan array $\left[1, e^x-1\right]$ \seqnum{A048993}, will be denoted by $S(n,k)$ in this note. We have 
$$S(n,k)=\frac{1}{k!} \sum_{j=0}^k (-1)^{k-j} \binom{k}{j}j^n.$$

\section{Preliminaries: From Narayana to Euler}
There are three Narayana triangles in common usage. In terms of their general terms, these are characterised as follows.
$$N_1(n,k)=\frac{1}{k+1}\binom{n+1}{k}\binom{n}{k}.$$ This triangle begins
$$\left(
\begin{array}{ccccccc}
 1 & 0 & 0 & 0 & 0 & 0 & 0 \\
 1 & 0 & 0 & 0 & 0 & 0 & 0 \\
 1 & 1 & 0 & 0 & 0 & 0 & 0 \\
 1 & 3 & 1 & 0 & 0 & 0 & 0 \\
 1 & 6 & 6 & 1 & 0 & 0 & 0 \\
 1 & 10 & 20 & 10 & 1 & 0 & 0 \\
 1 & 15 & 50 & 50 & 15 & 1 & 0 \\
\end{array}
\right).$$
$$N_2(n,k)=\frac{1}{n-k+1} \binom{n-1}{n-k}\binom{n}{k}.$$ This triangle begins
$$\left(
\begin{array}{ccccccc}
 1 & 0 & 0 & 0 & 0 & 0 & 0 \\
 0 & 1 & 0 & 0 & 0 & 0 & 0 \\
 0 & 1 & 1 & 0 & 0 & 0 & 0 \\
 0 & 1 & 3 & 1 & 0 & 0 & 0 \\
 0 & 1 & 6 & 6 & 1 & 0 & 0 \\
 0 & 1 & 10 & 20 & 10 & 1 & 0 \\
 0 & 1 & 15 & 50 & 50 & 15 & 1 \\
\end{array}
\right).$$
Finally, we have
$$N_3(n,k)=\frac{1}{k+1} \binom{n+1}{k}\binom{n}{k}.$$ This triangle begins
$$\left(
\begin{array}{ccccccc}
 1 & 0 & 0 & 0 & 0 & 0 & 0 \\
 1 & 1 & 0 & 0 & 0 & 0 & 0 \\
 1 & 3 & 1 & 0 & 0 & 0 & 0 \\
 1 & 6 & 6 & 1 & 0 & 0 & 0 \\
 1 & 10 & 20 & 10 & 1 & 0 & 0 \\
 1 & 15 & 50 & 50 & 15 & 1 & 0 \\
 1 & 21 & 105 & 175 & 105 & 21 & 1 \\
\end{array}
\right).$$
We have
\begin{center}
\begin{tabular}{|c|c|c|c|}
\hline Triangle & A-number & Generating function & Reversion of \\ \hline
$N_1$ & \seqnum{A131198} & $\frac{1}{1+(r-1)x}c\left(\frac{rx}{(1+(r-1)x)^2}\right)$ & $\frac{1-rx}{1-(r-1)x}$ \\ \hline
$N_2$ & \seqnum{A090181} & $\frac{1}{1-(r-1)x}c\left(\frac{x}{(1-(r-1)x)^2}\right)$ & $\frac{1-x}{1+(r-1)x} $ \\ \hline
$N_3$ & \seqnum{A001263} & $\frac{1}{1-(r+1)x}c\left(\frac{rx^2}{(1-(r+1)x)^2}\right)$ & $\frac{1}{1+(r+1)x+rx^2}$ \\
\hline
\end{tabular}
\end{center}
Expressing their (ordinary) generating functions as continued fractions, we have the following.
\begin{scriptsize}
\begin{center}
\begin{tabular}{|c|c|c|c|}
\hline Type & $N_1$ & $N_2$ & $N_3$ \\ \hline
Jacobi & $\mathcal{J}(1,r+1,r+1,\ldots;r,r,r,\ldots)$ & $\mathcal{J}(r,r+1,r+1,\ldots;r,r,r,\ldots)$ & $\mathcal{J}(r+1,r+1,r+1,\ldots;r,r,r,\ldots)$ \\ \hline
Stieltjes & $\mathcal{S}(1,1,1,\ldots; r,r,r\ldots)$ & $\mathcal{S}(r,r,r,\ldots;1,1,1,\dots)$ & $---$ \\ \hline
\end{tabular}
\end{center}
\end{scriptsize}
These three triangles can be mapped to the three Eulerian triangles essentially by taking the logarithmic derivative of the inverse Sumudu (or Laplace Borel) transform of the reversion of their generating functions.
The three Eulerian triangles $E_1$, $E_2$ and $E_3$ can be characterized by their bivariate generating functions as follows.
\begin{center}
\begin{tabular}{|c|c|c|} \hline
Triangle & Generating function & A-number \\ \hline
$E_1$ & $\frac{1-r}{e^{rt}-re^t}$ & \seqnum{A173018} \\ \hline
$E_2$ & $\frac{(1-r)e^{rt}}{e^{rt}-re^t}$ & \seqnum{A123125}  \\ \hline
$E_3$ & $\frac{e^{t(r+1)}(r-1)^2}{(e^{rt}-r e^t)^2}$ & \seqnum{A008292}\\ \hline
\end{tabular}
\end{center}

For the triangle $N_1$, we start with $\frac{1-rx}{1-(r-1)x}$. We then have the following.
\begin{enumerate}
\item The inverse Sumudu transform of $\frac{1-rx}{1-(r-1)x}$ is $\frac{r-e^{t(r-1)}}{r-1}$.
\item The logarithmic derivative of this result is $\frac{(r-1)e^{t(r-1)}}{r-e^{t(r-1)}}$.
\end{enumerate}
Then the negative of this exponential generating function, or $\frac{(1-r)e^{t(r-1)}}{r-e^{t(r-1)}}$, expands to give the Eulerian triangle $E_2$
$$\left(
\begin{array}{ccccccc}
 1 & 0 & 0 & 0 & 0 & 0 & 0 \\
 0 & 1 & 0 & 0 & 0 & 0 & 0 \\
 0 & 1 & 1 & 0 & 0 & 0 & 0 \\
 0 & 1 & 4 & 1 & 0 & 0 & 0 \\
 0 & 1 & 11 & 11 & 1 & 0 & 0 \\
 0 & 1 & 26 & 66 & 26 & 1 & 0 \\
 0 & 1 & 57 & 302 & 302 & 57 & 1 \\
\end{array}
\right).$$

For the triangle $N_2$, we start with the generating function $\frac{1-x}{1-(1-r)x}$. We then have the following.
\begin{enumerate}
\item The inverse Sumudu transform of $\frac{1-x}{1-(1-r)x}$ is $\frac{re^{t(1-r)}-1}{r-1}$.
\item The logarithmic derivative of this result is $\frac{r(r-1)}{e^{t(r-1)}-r}$.
\end{enumerate}
Dividing by $r$, we get the exponential generating function $\frac{r-1}{e^{t(r-1)}-r}$ which expands to give the Eulerian triangle $E_1$
$$\left(
\begin{array}{ccccccc}
 1 & 0 & 0 & 0 & 0 & 0 & 0 \\
 1 & 0 & 0 & 0 & 0 & 0 & 0 \\
 1 & 1 & 0 & 0 & 0 & 0 & 0 \\
 1 & 4 & 1 & 0 & 0 & 0 & 0 \\
 1 & 11 & 11 & 1 & 0 & 0 & 0 \\
 1 & 26 & 66 & 26 & 1 & 0 & 0 \\
 1 & 57 & 302 & 302 & 57 & 1 & 0 \\
\end{array}
\right).$$

Finally, we have the following.
\begin{enumerate}
\item The inverse Sumudu transform of $\frac{1}{1+(r+1)x+rx^2}$ is $\frac{re^{-rt}-e{-t}}{r-1}$.
\item The logarithmic derivative of this result is $-1+\frac{e^t(r-1)}{e^{rt}-re^t}$.
\end{enumerate}
Then $$\frac{1}{r}\left(1+\left(-1+\frac{e^t(r-1)}{e^{rt}-re^t}\right)\right)=\frac{e^t(1-r)}{e^{rt}-re^t}$$
expands to give the Eulerian triangle $E_1$.
$$\left(
\begin{array}{ccccccc}
 1 & 0 & 0 & 0 & 0 & 0 & 0 \\
 1 & 0 & 0 & 0 & 0 & 0 & 0 \\
 1 & 1 & 0 & 0 & 0 & 0 & 0 \\
 1 & 4 & 1 & 0 & 0 & 0 & 0 \\
 1 & 11 & 11 & 1 & 0 & 0 & 0 \\
 1 & 26 & 66 & 26 & 1 & 0 & 0 \\
 1 & 57 & 302 & 302 & 57 & 1 & 0 \\
\end{array}
\right).$$
Alternatively, $$-\frac{1}{r}\frac{d}{dt} \left(-1+\frac{e^t(r-1)}{e^{rt}-re^t}\right)=\frac{e^{t(r+1)}(r-1)^2}{(e^{rt}-r e^t)^2}$$ expands to given the Eulerian triangle $E_3$ that begins
$$\left(
\begin{array}{ccccccc}
 1 & 0 & 0 & 0 & 0 & 0 & 0 \\
 1 & 1 & 0 & 0 & 0 & 0 & 0 \\
 1 & 4 & 1 & 0 & 0 & 0 & 0 \\
 1 & 11 & 11 & 1 & 0 & 0 & 0 \\
 1 & 26 & 66 & 26 & 1 & 0 & 0 \\
 1 & 57 & 302 & 302 & 57 & 1 & 0 \\
 1 & 120 & 1191 & 2416 & 1191 & 120 & 1 \\
\end{array}
\right).$$

We have the following table of continued fraction generating functions for the Eulerian triangles.
\begin{scriptsize}
\begin{center}
\begin{tabular}{|c|c|c|c|}
\hline Type & $E_1$ & $E_2$ & $E_3$ \\ \hline
Jacobi & $\mathcal{J}(1,r+2,2r+3,\ldots;r,4r,9r,\ldots)$ & $\mathcal{J}(r,2r+1,3r+2,\ldots;r,4r,9r,\ldots)$ & $\mathcal{J}(r+1,2(r+1),3(r+1),\ldots;2r,6r,12r,\ldots)$ \\ \hline
Stieltjes & $\mathcal{S}(1,2,3,\ldots; r,2r,3r\ldots)$ & $\mathcal{S}(r,2r,3r,\ldots;1,2,3,\dots)$ & $---$ \\ \hline
\end{tabular}
\end{center}
\end{scriptsize}
At this stage we can invoke the $\mathcal{T}$ transform \cite{T} to map $E_1$ to $N_1$ and to map $E_2$ to $N_2$.
These relationships can also be seen clearly in terms of the Del\'eham notation.
\begin{center}
\begin{tabular}{|c|c|}
\hline
$N_1$ & $[1,0,1,0,1,0,\ldots]\,\Delta\,[0,1,0,1,0,1,0,\ldots]$\\ \hline
$N_2$ & $[0,1,0,1,0,\ldots]\,\Delta\,[1,0,1,0,\ldots]$\\ \hline
$N_3$ & $[0,1,0,1,0,\ldots]\,\Delta^{(1)}\,[1,0,1,0,1,0,\ldots]$ \\ \hline
$E_1$ & $[1,0,2,0,3,0,\ldots]\,\Delta\,[0,1,0,2,0,3,0,\ldots]$\\ \hline
$E_2$ & $[0,1,0,2,0,3,0,\ldots]\,\Delta\,[1,0,2,0,3,0,\ldots]$ \\ \hline
$E_3$ &  $[0,1,0,2,0,\ldots]\,\Delta^{(1)}\,[1,0,2,0,3,0,\ldots]$ \\ \hline
\end{tabular}
\end{center}
Thus for instance the ordinary generating function of $E_3$ is given by
$$\cfrac{1}{1-xy-
\cfrac{x}{1-
\cfrac{2xy}{1-
\cfrac{2x}{1-
\cfrac{3xy}{1-\cdots}}}}}.$$

Note that the image of the symmetric Narayana triangle $N_3$ by $\mathcal{T}^{-1}$ is the triangle with generating function
$$\mathcal{J}(r+1, 2(r+1), 3(r+1),\ldots; r,4r,9r,\ldots).$$ This is \seqnum{A046802}, which begins
$$\left(
\begin{array}{ccccccc}
 1 & 0 & 0 & 0 & 0 & 0 & 0 \\
 1 & 1 & 0 & 0 & 0 & 0 & 0 \\
 1 & 3 & 1 & 0 & 0 & 0 & 0 \\
 1 & 7 & 7 & 1 & 0 & 0 & 0 \\
 1 & 15 & 33 & 15 & 1 & 0 & 0 \\
 1 & 31 & 131 & 131 & 31 & 1 & 0 \\
 1 & 63 & 473 & 883 & 473 & 63 & 1 \\
\end{array}
\right).$$
The row polynomials of this triangle are the $h$-polynomials associated to the stellahedra. Multiplying this triangle on the right by the binomial triangle $\mathbf{B}=\left(\binom{n}{k}\right)$ gives us the triangle \seqnum{A248727}, which begins
$$\left(
\begin{array}{ccccccc}
 1 & 0 & 0 & 0 & 0 & 0 & 0 \\
 2 & 1 & 0 & 0 & 0 & 0 & 0 \\
 5 & 5 & 1 & 0 & 0 & 0 & 0 \\
 16 & 24 & 10 & 1 & 0 & 0 & 0 \\
 65 & 130 & 84 & 19 & 1 & 0 & 0 \\
 326 & 815 & 720 & 265 & 36 & 1 & 0 \\
 1957 & 5871 & 6605 & 3425 & 803 & 69 & 1 \\
\end{array}
\right).$$
Its rows give the $f$-polynomials for the stellahedra \cite{Toric}.

\section{\'Etude I: An introductory example}

We introduce the transformation pipeline by way of a simple example. The rational generating function that we work with in this section is $$g(x)=\frac{1}{1-x^2}.$$ This expands to give the sequence
$$1,0,1,0,1,0,1,0,\ldots.$$
Regarded in the form
$$\frac{1}{(1-x)(1+x)},$$ its expansion is seen to give the partial sums of the sequence
$$1,-1,1,-1,1,-1,1,-1,\ldots.$$ The INVERT$(-1)$ transform of $g(x)$ is
$$ \frac{g(x)}{1-xg(x)}=\frac{1}{1-x-x^2}$$ which expands to give the Fibonacci numbers \seqnum{A000045}
$$1, 1, 2, 3, 5, 8, 13, 21, 34, 55, 89,\ldots.$$
Similarly, the INVERT$(1)$ transform of $g(x)$ is given by
$$\frac{g(x)}{1+g(x)}=\frac{1}{1+x-x^2},$$ which expands to give the signed Fibonacci numbers
$$1, -1, 2, -3, 5, -8, 13, -21, 34, -55, 89,\ldots.$$
We now wish to operate on the generating function $g(x)$ as follows.
\begin{enumerate}
\item Take the inverse Sumudu transform of $g(x)$ to get $\tilde{g}(t)$ (that is, we get the corresponding exponential generating function)
\item Take the logarithmic derivative of $\tilde{g}(t)=\frac{\tilde{g}'(t)}{\tilde{g}(t)}:=h(t)$
\item Form $1-h(t)$ and get $\int_0^z (1-h(t))\,dt$ (thus pre-pending a $0$ to the expansion of $1-h(t)$).
\item Get the derivative of the reversion of this last result.
\end{enumerate}
We shall refer to the application of this sequence of operations as the ``transformation pipeline'' $\mathcal{P}$. Note that we have chosen the sample generating function $\frac{1}{1-x^2}$ to ensure that all these steps make sense in this case, as we shall now see.
\begin{proposition} The image of the generating function $g(x)=\frac{1}{1-x^2}$ under the transformation pipeline is
$$\mathcal{P}\left(\frac{1}{1-x^2}\right)=\frac{1}{2-e^x},$$ the generating function of the Fubini numbers
$$1, 1, 3, 13, 75, 541, 4683, 47293, 545835, 7087261, \ldots\quad \seqnum{A000670}$$ which enumerate ordered partitions.
\end{proposition}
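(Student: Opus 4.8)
The plan is to run the four steps of the pipeline $\mathcal{P}$ in order, carrying a closed form along at each stage; the only genuine work is the reversion in step~4, which I would handle by exponentiating the defining equation to reduce it to an algebraic one.

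First I would compute the inverse Sumudu transform. Since this transform sends $\sum_n g_n x^n$ to the exponential generating function $\sum_n g_n t^n/n!$, and the coefficients of $g(x)=\frac{1}{1-x^2}$ are $1,0,1,0,\ldots$, I obtain
$$\tilde{g}(t)=\sum_{k\ge 0}\frac{t^{2k}}{(2k)!}=\cosh t.$$
Next, the logarithmic derivative is $h(t)=\tilde{g}'(t)/\tilde{g}(t)=\sinh t/\cosh t=\tanh t$, so $1-h(t)=1-\tanh t$. Integrating, and using $\int(1-\tanh t)\,dt=t-\ln\cosh t$ together with $\ln\cosh 0=0$, gives
$$F(z):=\int_0^z\bigl(1-h(t)\bigr)\,dt=z-\ln\cosh z,$$
a series with $F(0)=0$ and $F'(0)=1-\tanh 0=1$, so its compositional reversion is well defined.

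The crux is to reverse $F$. Writing $w$ for the reversion and imposing $F(w)=z$, I must solve $w-\ln\cosh w=z$. The key manoeuvre is to exponentiate: $\cosh w=e^{w-z}$, and substituting $\cosh w=\tfrac12(e^w+e^{-w})$ and clearing denominators turns this into a quadratic in $p=e^w$, namely $p^2(2e^{-z}-1)=1$. Hence $e^{2w}=(2e^{-z}-1)^{-1}$, so the reversion of $F$ is $-\tfrac12\ln(2e^{-z}-1)$. Finally, differentiating as step~4 demands,
$$\frac{d}{dz}\left(-\tfrac12\ln\bigl(2e^{-z}-1\bigr)\right)=\frac{e^{-z}}{2e^{-z}-1}=\frac{1}{2-e^z},$$
which is the asserted image $\mathcal{P}\bigl(\tfrac{1}{1-x^2}\bigr)=\frac{1}{2-e^x}$; recognising $\frac{1}{2-e^x}=\sum_{k\ge 0}(e^x-1)^k$ as the exponential generating function of the ordered set partitions then identifies the Fubini numbers and completes the argument.

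I expect the reversion to be the only subtle point: one should resist inverting the logarithmic equation term by term and instead exponentiate, reducing $w-\ln\cosh w=z$ to an algebraic (indeed quadratic) relation in $e^w$, after which every step is a direct computation. A minor bookkeeping remark is that step~4 uses the ordinary compositional reversion here, since $F(0)=0$, rather than the $\frac{1}{x}\text{Rev}(xf(x))$ variant reserved for series with nonzero constant term.
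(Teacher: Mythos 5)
Your proof is correct and follows essentially the same route as the paper's, namely executing the pipeline steps in order; you additionally supply the exponentiation argument reducing the reversion $w-\ln\cosh w=z$ to a quadratic in $e^w$, which the paper leaves implicit. Note that your reversion $-\tfrac{1}{2}\ln\left(2e^{-z}-1\right)$ carries the correct sign, whereas the paper's intermediate expression $\tfrac{1}{2}\ln\left(2e^{-x}-1\right)$ has a sign slip (its derivative is $-\tfrac{1}{2-e^x}$) that is silently corrected in the paper's final step.
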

\begin{proof} The proof consists of carrying out the steps of the pipeline.
\begin{enumerate}
\item The inverse Sumudu transform of $g(x)=\frac{1}{1-x^2}$ is $\tilde{g}(t)=\cosh(t)$
\item The logarithmic derivative of $\tilde{g}(t)$ is $\tanh(t)$.
\item Calculate $\int_0^z (1-\tanh(t))\,dt=2z-\ln\left(\frac{e^{2z}}{2}+\frac{1}{2}\right)$
\item Solve $2z-\ln\left(\frac{e^{2z}}{2}+\frac{1}{2}\right)=x$ for $z$ to get $\frac{1}{2}\ln\left(2e^{-x}-1\right)$
\item Taking the derivative of this last expression gives us $\frac{1}{2-e^x}$.
\end{enumerate}
\end{proof}
A natural question that arises is whether we can extend this to the parameterized generating function $\frac{1}{1-rx^2}$. Unfortunately, this is not so easy. Taking the case of $\frac{1}{1-2x^2}$, we find that we have
$$\frac{1}{1-2x^2}\to \cosh(\sqrt{2}t)\to \sqrt{2}\tanh(\sqrt{2}t)\to (1+\sqrt{2})z-\ln\left(\frac{e^{2\sqrt{2}z}}{2}+\frac{1}{2}\right).$$
Unfortunately, the reversion of the last power series is non-elementary. We note that numerically, the
expansion of the derivative of the reversion begins
$$1, 2, 12, 112, 1440, 23648, 473088, 11164288, 303648000, 9352781312,\ldots.$$
A significant difference between this sequence and that of the Fubini numbers now emerges. The Fubini numbers admit of a continued fraction ordinary generating function, with integer coefficients. In effect, this generating function is given by
$$\cfrac{1}{1-x-
\cfrac{2x^2}{1-4x-
\cfrac{8x^2}{1-7x-
\cfrac{18x^2}{1-10x-
\cfrac{32x^2}{1-13x-\cdots}}}}},$$ or equivalently
$$\cfrac{1}{1-
\cfrac{x}{1-
\cfrac{2x}{1-
\cfrac{2x}{1-
\cfrac{4x}{1-
\cfrac{3x}{1-
\cfrac{6x}{1-\cdots}}}}}}}.$$
We use the shorthand
$$\mathcal{J}(1,4,7,10,\ldots;2,8,18,32,\ldots)$$ and
$$\mathcal{S}(1,2,2,4,3,6,4,8,5,\ldots)=\mathcal{S}(1,2,3,4,\ldots;2,4,6,8,\ldots),$$ for these continued fractions, where $\mathcal{J}$ stands for ``Jacobi'', and $\mathcal{S}$ stands for ``Stieltjes''.

We now note that the sequence $$1, 2, 12, 112, 1440, 23648, 473088, 11164288, 303648000, 9352781312,\ldots$$
does not possess an ordinary generating function expressible in terms of a continued fraction with integer coefficients.

\begin{example} We consider the sequence that begins
$$1,0,2,0,2,0,2,0,2,0,\ldots$$ with generating function
$$g(x)=\frac{1+x^2}{1-x^2}.$$
We find the following pipeline.
$$\frac{1+x^2}{1-x^2}\to 2\cosh(t)-1\to \frac{2\sinh(t)}{2\cosh(t)-1}\to 2z-\ln\left(2^{2z}-e^z+1\right).$$
Reverting this last expression and taking the derivative, we find that under the transformation pipeline,
$$\frac{1+x^2}{1-x^2} \to \frac {2} {4 - 3 e^x + \sqrt {e^x (4 - 3 e^x)}},$$ with the later
exponential generating function expanding to
$$1, 2, 12, 110, 1380, 22022, 426972, 9747950, 256176660,\ldots.$$
This suggests reversing the pipeline for sequences with generating functions of the form
$$ \frac {2} {(r+1) - r e^{ax} + \sqrt {e^{ax} ((r+1) - r e^x)}},$$ for suitable values of $a$ and $x$. A simple case  that presents itself is $a=4$, $r=0$. Then we get
$$  \frac {2} {1  + \sqrt {e^{4x} (1)}}=\frac{2}{1+e^{2x}}=\frac{e^{-z}}{\cosh(z)}=e^{-z}\sech(z),$$ which expands to give the sequence that begins
$$ 1, -1, 0, 2, 0, -16, 0, 272, 0, -7936, 0,\ldots.$$
The ordinary generating function of this sequence is
$$\mathcal{J}(-1,-1,-1,-1,\ldots;-1,-4,-9,-25,\ldots),$$ or equivalently
$$\mathcal{S}(-1,-1,-2,-2,-3,-3,-4,\ldots)=\mathcal{S}(-1,-2,-3,\ldots;-1,-2,-3,\ldots).$$
To begin the reverse pipeline, we form the sequence
$$ 0, 1, -1, 0, 2, 0, -16, 0, 272, 0, -7936, 0,\ldots$$ with exponential generating function
$$\int_0^x \frac{e^{-z}}{\cosh(z)}\,dz=2x-\ln\left(\frac{e^{2x}}{2}+\frac{1}{2}\right).$$
We now revert this sequence to get the sequence
$$ 0,1, 1, 3, 13, 75, 541, 4683, 47293, 545835, 7087261, \ldots$$ of right-shifted Fubini numbers, with generating function $-\frac{1}{2}\ln\left(2e^{-x}-1\right)$.

The logarithmic derivative sequence that we seek now begins
 $$-1, -3, -13, -75, -541, -4683, -47293, -545835, -7087261, \ldots$$ with generating function
 $$-\frac{d}{dx} \frac{1}{2-e^x}=-\frac{e^x}{(e^x-2)^2}.$$ To reverse the logarithmic derivative we integrate and take the exponential:
 $$ \int_0^z -\frac{e^x}{(e^x-2)^2}\,dx=\frac{e^z-1}{e^z-2} \to e^{\frac{e^z-1}{e^z-2}}.$$
 This is now the exponential generating function of the pre-image sequence $\tilde{g}(t)$. This sequence begins
 $$1, -1, -2, -5, -13, -12, 379, 6907, 99112, 1378941, 19514571, 284384318,\ldots.$$
 It does not have a rational ordinary generating function.

 We note that the intermediate sequence
 $$1, 3, 13, 75, 541, 4683, 47293, 545835, 7087261, \ldots$$ of once-shifted Fubini numbers with exponential generating function $\frac{e^z}{(2-e^z)^2}$ has an ordinary generating function expressible as the continued fraction
 $$\mathcal{J}(3,6,9,12,\ldots,3(n+1),\ldots;4,12,24,40,\ldots,2(n+1)(n+2),\ldots).$$
\end{example}
We have the following general result. 
\begin{proposition} Let $F(z)$ be the end of the transformation pipeline $g(x) \to F(z)$, that is, $F(x)=\mathcal{P}(g(x))$.   Then
$$\tilde{g}(t)=e^{t-\text{Rev}\left(\int_0^t F(z)\,dz\right)},$$ and
$$F(z)=\frac{d}{dz} \text{Rev}\left(\int_0^z (1-\frac{\tilde{g}'(t)}{\tilde{g}(t)}\,dt\right).$$
\end{proposition}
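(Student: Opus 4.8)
The plan is to observe that the second identity is nothing more than the definition of the pipeline $\mathcal{P}$ written out, so that all the substance lies in the first (inversion) identity; I would prove the latter by unwinding the four steps of $\mathcal{P}$ as formal power series operations and then exploiting the fact that compositional reversion is an involution.

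First I would fix the normalization $\tilde{g}(0)=1$, which holds whenever $g(0)=1$ (as in all the worked examples), since the inverse Sumudu transform preserves the constant term. Setting $h(t)=\tilde{g}'(t)/\tilde{g}(t)$ and $G(z)=\int_0^z(1-h(t))\,dt$, the key algebraic observation is that $h=(\ln\tilde{g})'$, so that $\int_0^z h(t)\,dt=\ln\tilde{g}(z)$ and hence
$$G(z)=z-\ln\tilde{g}(z).$$
By construction the last step of the pipeline gives $F(z)=\frac{d}{dz}\text{Rev}(G)(z)$; since $\text{Rev}(G)$ has zero constant term, integrating from $0$ recovers it exactly, namely
$$\int_0^z F(w)\,dw=\text{Rev}(G)(z).$$

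Applying $\text{Rev}$ to both sides and using that the reversion of a series with zero constant term and nonzero linear coefficient is an involution, $\text{Rev}(\text{Rev}(G))=G$, I would obtain
$$\text{Rev}\left(\int_0^t F(w)\,dw\right)=G(t)=t-\ln\tilde{g}(t).$$
Subtracting from $t$ leaves $t-\text{Rev}\left(\int_0^t F(w)\,dw\right)=\ln\tilde{g}(t)$, and exponentiating yields $\tilde{g}(t)=e^{\,t-\text{Rev}\left(\int_0^t F(z)\,dz\right)}$, which is the claimed inversion formula.

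The computation itself is short, so the only genuine work is the bookkeeping of the constants of integration together with the normalization, and checking that every operation in the pipeline is a legitimate formal power series manipulation. The step I expect to require the most care is the appeal to involutivity of $\text{Rev}$: this needs $G$ to have a nonzero linear coefficient, i.e. $G'(0)=1-h(0)\neq 0$, equivalently $\tilde{g}'(0)\neq\tilde{g}(0)$. I would therefore record this nonvanishing as the precise hypothesis under which the pipeline $\mathcal{P}$, and hence the proposition, is well-defined, noting that it holds in the worked examples where $h(0)=0$.
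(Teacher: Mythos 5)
Your proof is correct and follows essentially the same route as the paper: both identify $\int_0^z\bigl(1-\tilde{g}'/\tilde{g}\bigr)\,dt=z-\ln\tilde{g}(z)$ as the reversion of $\int_0^z F\,dt$ and then solve for $\ln\tilde{g}$. Your added bookkeeping of the normalization $\tilde{g}(0)=1$ and the nonvanishing linear coefficient needed for $\text{Rev}$ to be involutive is a welcome precision the paper leaves implicit.
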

\begin{proof}
By the pipeline, we have
$$\int_0^z (1- \frac{\tilde{g}'(t)}{\tilde{g}(t)}\,dt=z-\ln(\tilde{g}(z))=\text{Rev}\int_0^z F(t)\,dt.$$
Thus $$\ln(\tilde{g}(z))=z-\text{Rev}\int_0^z F(t)\,dt.$$
\end{proof}
We shall now generalize the generating function $\frac{1}{1-x^2}$ to the parameterized generating function
$$g(x)=\frac{1+(r-1)x}{(1-x)(1+rx)} \to \tilde{g}(t)=\frac{e^{-rt}+r e^t}{1+r}.$$  This is the bivariate generating function of the triangle that begins
$$\left(
\begin{array}{ccccccc}
 1 & 0 & 0 & 0 & 0 & 0 & 0 \\
 0 & 0 & 0 & 0 & 0 & 0 & 0 \\
 0 & 1 & 0 & 0 & 0 & 0 & 0 \\
 0 & 1 & -1 & 0 & 0 & 0 & 0 \\
 0 & 1 & -1 & 1 & 0 & 0 & 0 \\
 0 & 1 & -1 & 1 & -1 & 0 & 0 \\
 0 & 1 & -1 & 1 & -1 & 1 & 0 \\
 0 & 1 & -1 & 1 & -1 & 1 & -1 \\
\end{array}
\right),$$
or
$$\left(\frac{1}{1-x},0 \cdot x\right)-\left(\frac{x}{1-x},-x\right)$$ in terms of Riordan arrays.

We note at this juncture that by multiplying this matrix on the right by $\mathbf{B}=\left(\binom{n}{k}\right)$ we obtain the matrix
$$\left(
\begin{array}{cccccccc}
 1 & 0 & 0 & 0 & 0 & 0 & 0 & 0 \\
 0 & 0 & 0 & 0 & 0 & 0 & 0 & 0 \\
 1 & 1 & 0 & 0 & 0 & 0 & 0 & 0 \\
 0 & -1 & -1 & 0 & 0 & 0 & 0 & 0 \\
 1 & 2 & 2 & 1 & 0 & 0 & 0 & 0 \\
 0 & -2 & -4 & -3 & -1 & 0 & 0 & 0 \\
 1 & 3 & 6 & 7 & 4 & 1 & 0 & 0 \\
 0 & -3 & -9 & -13 & -11 & -5 & -1 & 0 \\
\end{array}
\right),$$ or
$$\left(\frac{1}{1-x}, 0 \cdot x\right)-\left(\frac{x}{1-x^2}, \frac{-x}{1+x}\right),$$ which has bivariate generating function $\frac{1+rx}{(1-x)(1+(r+1)x)}$.

The sequence with generating function $g(x)=\frac{1+(r-1)x}{(1-x)(1+rx)}$ is the partial sum sequence of the sequence
$$1, -1, r, - r^2, r^3, - r^4, r^5, - r^6, r^7, - r^8, r^9,\ldots$$ with generating function $\frac{1+(r-1)x}{1+rx}$. The inverse binomial transform of $\frac{1+rx}{(1-x)(1+(r+1)x)}$ expands to give the sequence
$$1, -1, r + 1, - (r + 1)^2, (r + 1)^3, - (r + 1)^4, (r + 1)^5, - (r + 1)^6,\ldots.$$ The coefficient array of these polynomials is then
$$\left(
\begin{array}{cccccccc}
 1 & 0 & 0 & 0 & 0 & 0 & 0 & 0 \\
 -1 & 0 & 0 & 0 & 0 & 0 & 0 & 0 \\
 1 & 1 & 0 & 0 & 0 & 0 & 0 & 0 \\
 -1 & -2 & -1 & 0 & 0 & 0 & 0 & 0 \\
 1 & 3 & 3 & 1 & 0 & 0 & 0 & 0 \\
 -1 & -4 & -6 & -4 & -1 & 0 & 0 & 0 \\
 1 & 5 & 10 & 10 & 5 & 1 & 0 & 0 \\
 -1 & -6 & -15 & -20 & -15 & -6 & -1 & 0 \\
\end{array}
\right).$$ The next result shows that the reversion of this triangle is a variant of the Narayana triangle.
\begin{proposition}
The reversion of the inverse binomial transform of $g(x)=\frac{1+(r-1)x}{(1-x)(1+rx)}$ expands to the signed Narayana triangle that begins
$$\left(
\begin{array}{cccccccc}
 1 & 0 & 0 & 0 & 0 & 0 & 0 & 0 \\
 1 & 0 & 0 & 0 & 0 & 0 & 0 & 0 \\
 1 & -1 & 0 & 0 & 0 & 0 & 0 & 0 \\
 1 & -3 & 1 & 0 & 0 & 0 & 0 & 0 \\
 1 & -6 & 6 & -1 & 0 & 0 & 0 & 0 \\
 1 & -10 & 20 & -10 & 1 & 0 & 0 & 0 \\
 1 & -15 & 50 & -50 & 15 & -1 & 0 & 0 \\
 1 & -21 & 105 & -175 & 105 & -21 & 1 & 0 \\
\end{array}
\right).$$
\end{proposition}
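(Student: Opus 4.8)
The plan is to carry out the reversion explicitly and match the resulting closed form against an independently computed closed form for the signed Narayana triangle.

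First I would pin down the bivariate generating function of the triangle being reverted. By definition the inverse binomial transform of $g$ is $\frac{1}{1+x}\,g\!\left(\frac{x}{1+x}\right)$, and a short substitution collapses this to $\frac{1+rx}{1+(r+1)x}$; expanding $\frac{1+rx}{1+(r+1)x}=1-\frac{x}{1+(r+1)x}$ reproduces exactly the polynomials $1,-1,r+1,-(r+1)^2,\ldots$ whose coefficient vectors form the displayed triangle. So the object to revert is $f(x)=\frac{1+rx}{1+(r+1)x}$, and since $f(0)=1\neq 0$ the reversion is defined.

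Next I would apply the reversion as specified in the Introduction: solve $u\,f(u)=x$, i.e.\ $\frac{u(1+ru)}{1+(r+1)u}=x$. Clearing denominators gives the quadratic $ru^2+\bigl(1-(r+1)x\bigr)u-x=0$, and choosing the branch with $u(0)=0$ yields $u=\frac{-\left(1-(r+1)x\right)+\sqrt{D}}{2r}$, where the discriminant simplifies as $D=\left(1-(r+1)x\right)^2+4rx=\left(1-(1-r)x\right)^2+4rx^2$. The reversion is then $R(x)=u(x)/x=\frac{\sqrt{D}-\left(1-(r+1)x\right)}{2rx}$.

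Independently, I would produce a closed form for the signed Narayana triangle. Reading off the displayed array, its $(n,k)$ entry equals $(-1)^k N_3(n-1,k)$ for $n\ge 1$, together with a single leading $1$, so its bivariate generating function is $1+x\,G(x,-r)$ where $G(x,r)=\frac{1}{1-(r+1)x}\,c\!\left(\frac{rx^2}{(1-(r+1)x)^2}\right)$ is the $N_3$ generating function recorded in the Preliminaries. Substituting $r\mapsto -r$ turns the Catalan argument into $w=\frac{-rx^2}{(1-(1-r)x)^2}$, and the identity $c(w)=\frac{1-\sqrt{1-4w}}{2w}$ converts $\sqrt{1-4w}$ into $\sqrt{D}/(1-(1-r)x)$ using precisely the same $D$ as above. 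After cancellation this gives $1+x\,G(x,-r)=1+\frac{\sqrt{D}-\left(1-(1-r)x\right)}{2rx}$.

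Finally I would check that the two closed forms coincide. The comparison reduces to the single observation that the leading ``$1+$'' in the signed Narayana expression contributes $2rx$ to the numerator, which is exactly the gap between the linear terms $\left(1-(1-r)x\right)$ and $\left(1-(r+1)x\right)$, since $(r+1)-(1-r)=2r$; both expressions then equal $\frac{\sqrt{D}-\left(1-(r+1)x\right)}{2rx}$, establishing $R(x)=1+x\,G(x,-r)$. I expect the main obstacle to be bookkeeping rather than depth: getting the row-shift by one and the column-wise sign $(-1)^k$ in the $N_3$-to-signed-Narayana dictionary exactly right, and choosing consistent branches of the square root so that the power-series expansions, not merely the formal algebraic expressions, agree. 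As a safeguard I would verify the leading terms directly, confirming $R(x)=1+x+(1-r)x^2+\cdots$, which matches the rows $[1],\,[1],\,[1,-1],\ldots$ of the displayed triangle.
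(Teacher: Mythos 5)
Your proposal is correct and follows essentially the same route as the paper: compute the inverse binomial transform $\frac{1+rx}{1+(r+1)x}$, solve $uf(u)=x$ for the branch with $u(0)=0$, and recognize $u(x)/x$ as $\frac{1}{1-(r+1)x}c\left(\frac{-rx}{(1-(r+1)x)^2}\right)$. In fact you go one step further than the paper, which simply asserts that this last generating function expands to the signed Narayana triangle, whereas you close that gap by independently deriving $1+xG(x,-r)$ from the $N_3$ formula and checking the two closed forms agree.
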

\begin{proof}
The inverse binomial transform of $g(x)$ is given by $\frac{1+rx}{1+(r+1)x}$. Solving the equation
$$\frac{u(1+ru)}{1+(r+1)u}=x$$ for $u$ such that $u(0)=0$, we find
$$u(x)=\frac{x}{1-(r+1)x}c\left(\frac{-rx}{(1-(r+1)x)^2}\right).$$
Then $\frac{1}{1-(r+1)x}c\left(\frac{-rx}{(1-(r+1)x)^2}\right)$ expands to give the signed Narayana triangle above.
\end{proof}
After these preliminaries, we can now state the main result of this section.
\begin{proposition}$$ \mathcal{P}\left(\frac{1+(r-1)x}{(1-x)(1+rx)}\right)=\frac{1}{1+r(1-e^z)}.$$
\end{proposition}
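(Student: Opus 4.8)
The plan is to run the four steps of the pipeline $\mathcal{P}$ explicitly, starting from the inverse Sumudu transform $\tilde{g}(t)=\frac{e^{-rt}+re^t}{1+r}$ already recorded for this $g$. Steps 1--3 are routine: differentiating gives $\tilde{g}'(t)=\frac{r(e^t-e^{-rt})}{1+r}$, so the logarithmic derivative is $h(t)=\frac{r(e^t-e^{-rt})}{e^{-rt}+re^t}$, and a short simplification yields the clean form $1-h(t)=\frac{(1+r)e^{-rt}}{e^{-rt}+re^t}$. Rather than integrate this directly, I would invoke $\int_0^z h(t)\,dt=\ln\tilde{g}(z)-\ln\tilde{g}(0)=\ln\tilde{g}(z)$ (here $\tilde{g}(0)=1$), so that Step 3 produces $\int_0^z(1-h(t))\,dt=z-\ln\frac{e^{-rz}+re^z}{1+r}$.

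The heart of the proof is Step 4, the reversion of $R(u):=u-\ln\frac{e^{-ru}+re^u}{1+r}$. The equation $R(u)=x$ rearranges to $(1+r)e^{u-x}=e^{-ru}+re^u$, which looks transcendental. The key observation is that multiplying through by $e^{ru}$ and setting $v:=e^{(1+r)u}$ linearizes it completely: one gets $(1+r)e^{-x}v=1+rv$, hence $v\big((1+r)e^{-x}-r\big)=1$ and $v=\frac{e^x}{1+r(1-e^x)}$. Taking logarithms gives the closed form $(1+r)\,u(x)=x-\ln\!\big(1+r(1-e^x)\big)$ for the reverted series, with $u(0)=0$ as required.

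Finally, differentiating this closed form and simplifying gives $F(x)=u'(x)=\frac{1}{1+r}\Big(1+\frac{re^x}{1+r(1-e^x)}\Big)=\frac{1}{1+r(1-e^x)}$, which is the claimed generating function (after renaming the dummy variable $x$ to $z$). I expect the only genuine obstacle to be spotting the linearizing substitution $v=e^{(1+r)u}$ in Step 4; once that is in hand, everything reduces to the logarithm bookkeeping already illustrated in the Fubini case.

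As an independent check, one can instead verify the identity through the reverse direction supplied by the general Proposition: computing $\int_0^z F(w)\,dw=\frac{1}{1+r}\ln\frac{e^z}{1+r-re^z}$ by the substitution $w\mapsto e^w$ and partial fractions, reverting, and confirming that $e^{t-\text{Rev}(\int_0^t F(w)\,dw)}$ collapses back to $\frac{e^{-rt}+re^t}{1+r}=\tilde{g}(t)$. This cross-check runs the same algebra in reverse and so doubles as a consistency test on the bijectivity of $\mathcal{P}$.
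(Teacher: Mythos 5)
Your computation is correct, but you run the pipeline in the opposite direction from the paper. You execute $\mathcal{P}$ forward: starting from $\tilde{g}(t)=\frac{e^{-rt}+re^{t}}{1+r}$, you form $1-h(t)=\frac{(1+r)e^{-rt}}{e^{-rt}+re^{t}}$, integrate to get $z-\ln\tilde{g}(z)$, and then revert $u-\ln\frac{e^{-ru}+re^{u}}{1+r}=x$ via the linearizing substitution $v=e^{(1+r)u}$, which is indeed the one genuinely nontrivial step; everything checks out, including $u'(x)=\frac{1}{1+r(1-e^{x})}$. The paper instead works backward: it posits $F(z)=\frac{1}{r+1-re^{z}}$, computes $\int_0^z F(t)\,dt$ and its reversion, and invokes the general identity $\tilde{g}(t)=e^{t-\mathrm{Rev}\left(\int_0^t F(z)\,dz\right)}$ from the preceding proposition to recover $\tilde{g}$, finishing with a Sumudu transform to land back at $g(x)$. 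What your route buys is a self-contained, direct verification of the statement exactly as $\mathcal{P}$ is defined, with the reversion carried out explicitly rather than asserted; what the paper's route buys is reuse of the general inversion formula, at the cost of having to revert $\int_0^z F$ instead (which it states without derivation, and with a stray $\pi i$ and sign in the antiderivative that your cleaner computation of $\int_0^z F(w)\,dw=\frac{1}{1+r}\ln\frac{e^{z}}{1+r-re^{z}}$ avoids). Your closing cross-check is essentially the paper's entire proof, so you have in effect supplied both arguments.
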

\begin{proof}
We let $F(z)=\frac{1}{r+1-re^z}$. Then $\int_0^z F(t)\,dt=\frac{1}{r+1}(\pi i+z -\ln(r e^z-r-1))$.
Then
$$ \text{Rev}\int_0^z F(t)\,dt=(r+1)z-\ln\left(\frac{1+e^{rx(r+1)}}{r+1}\right).$$
This gives us
$$\tilde{g}(t)=e^{t-\text{Rev}\left(\int_0^t F(z)\,dz\right)}=\frac{1}{r+1} e^{-rx}(r e^{x(r+1)}+1).$$
Taking the Sumudu transform of this exponential generating function gives us
$$g(x)=\frac{1+(r-1)x}{(1-x)(1+rx)}.$$
\end{proof}
\begin{proposition} We have, for $r \ne 0$, that
$$ \mathcal{P}\left(\frac{1+(r-1)x}{(1-x)(1+rx)}\right)=\frac{1}{1+r(1-e^z)}$$ is the generating function of the moment sequence for the family of orthogonal polynomials whose coefficient array is given by the exponential Riordan array
$$\left[\frac{1}{1+rz}, \ln\left(\frac{1+(r+1)z}{1+rz}\right)\right].$$
These moments appear as the initial column elements in the inverse array
$$\left[\frac{1}{1+r(1-e^z)}, \frac{e^z-1}{1+r(1-e^z)}\right].$$
\end{proposition}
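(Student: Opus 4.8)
The plan is to establish the three linked assertions in turn: that the exhibited array is the inverse of $\left[\frac{1}{1+rz}, \ln\left(\frac{1+(r+1)z}{1+rz}\right)\right]$, that its initial column carries the sequence with exponential generating function $\frac{1}{1+r(1-e^z)}$, and that this sequence really is the moment sequence of the associated orthogonal family.

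First I would verify the inverse via the exponential Riordan rule $[g,f]^{-1}=\left[1/(g\circ \bar f),\,\bar f\right]$. Writing $f(z)=\ln\left(\frac{1+(r+1)z}{1+rz}\right)$ and solving $e^{w}=\frac{1+(r+1)z}{1+rz}$ for $z$ produces the reversion $\bar f(w)=\frac{e^{w}-1}{1+r(1-e^{w})}$, which is the second component claimed. Substituting into $g=\frac{1}{1+rz}$, one finds $1+r\bar f(w)=\frac{1}{1+r(1-e^{w})}$, so that $g(\bar f(w))=1+r(1-e^{w})$ and hence $1/(g\circ\bar f)=\frac{1}{1+r(1-e^{w})}$. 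This yields exactly $\left[\frac{1}{1+r(1-e^z)}, \frac{e^z-1}{1+r(1-e^z)}\right]$. Since the initial column of any exponential Riordan array $[G,F]$ has exponential generating function $G$, the first column of this inverse carries the sequence with generating function $\frac{1}{1+r(1-e^z)}$, which is precisely the pipeline output of the previous proposition.

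The substantive step is orthogonality. I would invoke the production-matrix criterion: a lower-triangular array is the moment matrix of an orthogonal family --- with that family's coefficient array equal to its inverse and its moments sitting in the first column --- precisely when its production matrix is tridiagonal. For the array $A=[G,F]$ with $G=\frac{1}{1+r(1-e^z)}$ and $F=\frac{e^z-1}{1+r(1-e^z)}$, the production matrix is controlled by $\mathcal{A}(z)=F'(\bar F(z))$ and $\mathcal{Z}(z)=(G'/G)(\bar F(z))$, and it is tridiagonal exactly when $\deg\mathcal{A}\le 2$ and $\deg\mathcal{Z}\le 1$. Here $\bar F(z)=\ln\frac{1+(r+1)z}{1+rz}$ is already in hand (it is the $f$ of the coefficient array). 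Using the identities $F'=e^{z}G^{2}$ and $G=1+rF$ (so $G'=rF'$), and evaluating at $w=\bar F(z)$, where $e^{w}=\frac{1+(r+1)z}{1+rz}$ and $G(w)=1+rz$, a short computation gives
$$\mathcal{A}(z)=(1+(r+1)z)(1+rz), \qquad \mathcal{Z}(z)=r\,(1+(r+1)z).$$
Both have the required degree, so the production matrix is tridiagonal, with diagonal entries $r+n(2r+1)$ and subdiagonal entries $n^{2}r(r+1)$; equivalently the moment ordinary generating function is $\mathcal{J}(r,3r+1,5r+2,\ldots;\,r(r+1),4r(r+1),9r(r+1),\ldots)$. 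This confirms that the polynomials with coefficient array $\left[\frac{1}{1+rz},\ln\frac{1+(r+1)z}{1+rz}\right]$ are orthogonal and that $\frac{1}{1+r(1-e^z)}$ generates their moments. As a check, $r=1$ returns the Fubini continued fraction $\mathcal{J}(1,4,7,\ldots;2,8,18,\ldots)$ recorded earlier.

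I expect the main obstacle to be this last computation of $\mathcal{A}$ and $\mathcal{Z}$ in closed form for general $r$: it relies on carrying the reversion $\bar F$ through correctly, on the cancellations $F'=e^{z}G^{2}$ and $G=1+rF$, and on citing the production-matrix theorem for exponential Riordan arrays in precisely the form that equates tridiagonality with orthogonality. The inverse computation and the identification of the first column are routine by comparison.
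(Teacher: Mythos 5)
Your proposal is correct and follows essentially the same route as the paper: both compute $A(z)=f'(\bar f(z))=(1+rz)(1+(r+1)z)$ and $Z(z)=g'(\bar f(z))/g(\bar f(z))=r(1+(r+1)z)$ for the moment array $\left[\frac{1}{1+r(1-e^z)},\frac{e^z-1}{1+r(1-e^z)}\right]$ and conclude orthogonality from the tridiagonal production matrix. Your additional verifications --- that this array is the exponential-Riordan inverse of the stated coefficient array and that the first column carries the moments --- are left implicit in the paper but are routine and correctly executed.
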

\begin{proof} We let $[g,f]=\left[\frac{1}{1+r(1-e^z)}, \frac{e^z-1}{1+r(1-e^z)}\right]$. We find that
$$A(z)=f'(\bar{f}(z))=(1+rz)(1+(r+1)z),$$ and
$$Z(x)=\frac{g'(\bar{f}(z))}{g(\bar{f}(z))}=r(1+(r+1)z).$$
Thus the production matrix of $[g,f]$ is tri-diagonal and hence $[g,f]^{-1}$ is the coefficient array of a family of orthogonal polynomials.
\end{proof}
The production matrix has generating function
$$e^{zy}(r(1+(r+1)z)+y(1+rz)(1+(r+1)z)).$$
It begins
$$\left(
\begin{array}{cccccc}
 r & 1 & 0 & 0 & 0 & 0 \\
 r (r+1) & 3 r+1 & 1 & 0 & 0 & 0 \\
 0 & 4 r (r+1) & 5 r+2 & 1 & 0 & 0 \\
 0 & 0 & 9 r (r+1) & 7 r+3 & 1 & 0 \\
 0 & 0 & 0 & 16 r (r+1) & 9 r+4 & 1 \\
 0 & 0 & 0 & 0 & 25 r (r+1) & 11 r+5 \\
\end{array}
\right).$$
\begin{corollary}
$ \mathcal{P}\left(\frac{1+(r-1)x}{(1-x)(1+rx)}\right)=\frac{1}{1+r(1-e^z)}$ is the generating function of the moments for the family of orthogonal polynomials $P_n(x;r)$ that satisfy the three-term recurrence
$$P_n(x,r)=(x-(r+(n-1)(2r+1)))P_{n-1}(x;r)-r(r+1)(n-1)^2 P_{n-2}(x;r),$$
with $P_0(x;r)=1$ and $P_1(x;r)=x-r$.
\end{corollary}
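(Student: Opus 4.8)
The plan is to read the recurrence directly off the tridiagonal production matrix exhibited just before the corollary, so that the argument sits almost entirely on top of the previous proposition. Writing $[g,f]$ for the exponential Riordan array of that proposition, we already know its production (Stieltjes) matrix is tridiagonal with bivariate generating function
$$e^{zy}\bigl(Z(z)+yA(z)\bigr),\qquad A(z)=(1+rz)(1+(r+1)z),\quad Z(z)=r(1+(r+1)z).$$
The standard dictionary between production matrices and orthogonal polynomials then applies: because the production matrix of $[g,f]$ is tridiagonal with unit superdiagonal, the rows of $[g,f]^{-1}$ are the coefficient vectors of a family of monic orthogonal polynomials $P_n(x;r)$, and these satisfy a three-term recurrence whose coefficients are read straight off the matrix — the diagonal entry in row $n$ is the linear shift applied to $P_n$, and the subdiagonal entry is the coefficient of $P_{n-1}$.

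The one computation needed is to identify the \emph{general} diagonal and subdiagonal entries, rather than only the first few printed rows. First I would expand the two defining polynomials as $A(z)=1+(2r+1)z+r(r+1)z^2$ and $Z(z)=r+r(r+1)z$, so that $A$ has degree $2$ and $Z$ has degree $1$; this is exactly what forces tridiagonality for all $n$, not merely within the displayed truncation. Extracting the coefficient of $\tfrac{z^n}{n!}y^k$ from $e^{zy}(Z(z)+yA(z))$ gives the entry
$$p_{n,k}=\frac{n!}{k!}\,Z_{n-k}+\frac{n!}{(k-1)!}\,A_{n-k+1},$$
with the usual convention that terms carrying a negative index vanish. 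Setting $k=n$ yields the diagonal value $\alpha_n=Z_0+nA_1=(2n+1)r+n$, setting $k=n-1$ yields the subdiagonal value $\beta_n=nZ_1+n(n-1)A_2=n^2r(r+1)$, and setting $k=n-2$ returns $0$, confirming that nothing survives below the subdiagonal.

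Feeding these into the recurrence $P_{n+1}(x;r)=(x-\alpha_n)P_n(x;r)-\beta_n P_{n-1}(x;r)$ and reindexing $n\mapsto n-1$ produces exactly
$$P_n(x;r)=\bigl(x-(r+(n-1)(2r+1))\bigr)P_{n-1}(x;r)-r(r+1)(n-1)^2P_{n-2}(x;r),$$
while the initial data follow from $P_0(x;r)=1$ and $P_1(x;r)=x-\alpha_0=x-r$. I expect the only genuine subtlety to be the bookkeeping of indices and signs: one must keep the production-matrix/recurrence correspondence oriented so that $\alpha_0=r$ matches $P_1=x-r$, and must lean on the degree bounds for $A$ and $Z$ to justify that the tridiagonal pattern persists for every $n$ rather than inferring it from the finite block that happens to be displayed. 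Everything else is routine verification.
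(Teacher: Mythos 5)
Your proposal is correct and follows exactly the route the paper intends: the corollary is read off the tridiagonal production matrix established in the preceding proposition, with the diagonal entries $(2n+1)r+n$ giving the $x$-shift and the subdiagonal entries $n^2r(r+1)$ giving the $P_{n-2}$ coefficient. Your extraction of the general $(n,k)$ entry from $e^{zy}(Z(z)+yA(z))$ via $p_{n,k}=\frac{n!}{k!}Z_{n-k}+\frac{n!}{(k-1)!}A_{n-k+1}$ is in fact more careful than the paper, which only displays a finite truncation of the production matrix and leaves the general-$n$ pattern implicit.
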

Note that the sequences with exponential generating function $\frac{1}{r+1-re^z}$ have an ordinary generating function given by
$$\mathcal{J}(r,3r+1,5r+2,\ldots;r(r+1),4r(r+1),9r(r+1),\ldots),$$ or equivalently
$$\mathcal{S}(r,2r,3r,\ldots; r+1,2(r+1),3(r+1),\ldots)=\mathcal{S}(r,r+1,2r,2(r+1),3r,3(r+1)\ldots).$$

The sequence generated by $\frac{1}{r+1-re^x}$ is the polynomial sequence that begins
$$1, r, r(2r + 1), r(6r^2 + 6r + 1), r(24r^3 + 36r^2 + 14r + 1), r(120r^4 + 240r^3 + 150r^2 + 30r + 1),\ldots,$$ with coefficient array $a_{n,k}=k! S(n,k)$, where $S(n,k)$ are the Stirling numbers of the second kind. This is \seqnum{A019538}, which begins
 $$\left(
\begin{array}{ccccccc}
 1 & 0 & 0 & 0 & 0 & 0 & 0 \\
 0 & 1 & 0 & 0 & 0 & 0 & 0 \\
 0 & 1 & 2 & 0 & 0 & 0 & 0 \\
 0 & 1 & 6 & 6 & 0 & 0 & 0 \\
 0 & 1 & 14 & 36 & 24 & 0 & 0 \\
 0 & 1 & 30 & 150 & 240 & 120 & 0 \\
 0 & 1 & 62 & 540 & 1560 & 1800 & 720 \\
\end{array}
\right).$$ This triangle counts the number of set compositions of $n$ with $k$ blocks \cite{Petersen}, among other combinatorial interpretations.  
Thus we have
$$\left(
\begin{array}{ccccccc}
 1 & 0 & 0 & 0 & 0 & 0 & 0 \\
 0 & 0 & 0 & 0 & 0 & 0 & 0 \\
 0 & 1 & 0 & 0 & 0 & 0 & 0 \\
 0 & 1 & -1 & 0 & 0 & 0 & 0 \\
 0 & 1 & -1 & 1 & 0 & 0 & 0 \\
 0 & 1 & -1 & 1 & -1 & 0 & 0 \\
 0 & 1 & -1 & 1 & -1 & 1 & 0 \\
\end{array}
\right) \xrightarrow{\mathcal{P}} \left(
\begin{array}{ccccccc}
 1 & 0 & 0 & 0 & 0 & 0 & 0 \\
 0 & 1 & 0 & 0 & 0 & 0 & 0 \\
 0 & 1 & 2 & 0 & 0 & 0 & 0 \\
 0 & 1 & 6 & 6 & 0 & 0 & 0 \\
 0 & 1 & 14 & 36 & 24 & 0 & 0 \\
 0 & 1 & 30 & 150 & 240 & 120 & 0 \\
 0 & 1 & 62 & 540 & 1560 & 1800 & 720 \\
\end{array}
\right).$$
There are many important combinatorial applications of this array, which in the Del\'eham notation is
$$[0, 1, 0, 2, 0, 3, 0, 4, 0, 5,\ldots]\, \Delta\, [1, 1, 2, 2, 3, 3, 4, 4, 5, 5, \ldots],$$ with bivariate generating function 
$$\mathcal{J}(r,3r+1,5r+2,\ldots;r(r+1),4r(r+1),9r(r+1),\ldots).$$
For $r=1\ldots 5$, this polynomial sequence $1,r,r(2r+1),\ldots$ evaluates to \seqnum{A000670}, \seqnum{A004123}, \seqnum{A032033}, \seqnum{A094417}, and \seqnum{A094418}. They are referred to as generalized ordered Bell numbers. One should see also \seqnum{A094416}, which gathers these sequences into a single array.

We remark that using the $\mathcal{T}$ transform, we can associate the sequences with generating function
$$\mathcal{J}(r,3r+1,5r+2,\ldots;r(r+1),4r(r+1),9r(r+1),\ldots)$$ with those with generating function
$$\mathcal{J}(r,2r+1,2r+1,\ldots;r(r+1),r(r+1),r(r+1),\ldots).$$ The coefficient array of the polynomial sequence defined by $\mathcal{J}(r,2r+1,2r+1,\ldots;r(r+1),r(r+1),r(r+1),\ldots)$ is \seqnum{A086810} (see also \seqnum{A033282}), or
$$ [0, 1, 0, 1, 0, 1, \ldots]\, \Delta \, [1, 1, 1, 1, 1,  \ldots].$$ This triangle, which has general element
$$\frac{1}{n+1} \binom{n-1}{n-k}\binom{n+k}{k},$$  begins
$$\left(
\begin{array}{ccccccc}
 1 & 0 & 0 & 0 & 0 & 0 & 0 \\
 0 & 1 & 0 & 0 & 0 & 0 & 0 \\
 0 & 1 & 2 & 0 & 0 & 0 & 0 \\
 0 & 1 & 5 & 5 & 0 & 0 & 0 \\
 0 & 1 & 9 & 21 & 14 & 0 & 0 \\
 0 & 1 & 14 & 56 & 84 & 42 & 0 \\
 0 & 1 & 20 & 120 & 300 & 330 & 132 \\
\end{array}
\right).$$ Thus we have
$$\left(
\begin{array}{ccccccc}
 1 & 0 & 0 & 0 & 0 & 0 & 0 \\
 0 & 0 & 0 & 0 & 0 & 0 & 0 \\
 0 & 1 & 0 & 0 & 0 & 0 & 0 \\
 0 & 1 & -1 & 0 & 0 & 0 & 0 \\
 0 & 1 & -1 & 1 & 0 & 0 & 0 \\
 0 & 1 & -1 & 1 & -1 & 0 & 0 \\
 0 & 1 & -1 & 1 & -1 & 1 & 0 \\
\end{array}
\right) \xrightarrow{\mathcal{T} \circ \mathcal{P}}
\left(
\begin{array}{ccccccc}
 1 & 0 & 0 & 0 & 0 & 0 & 0 \\
 0 & 1 & 0 & 0 & 0 & 0 & 0 \\
 0 & 1 & 2 & 0 & 0 & 0 & 0 \\
 0 & 1 & 5 & 5 & 0 & 0 & 0 \\
 0 & 1 & 9 & 21 & 14 & 0 & 0 \\
 0 & 1 & 14 & 56 & 84 & 42 & 0 \\
 0 & 1 & 20 & 120 & 300 & 330 & 132 \\
\end{array}
\right).$$
 Row $n+1$ of this triangle is the $f$-vector of the simplicial complex dual to an associahedron of type $A_n$ \cite{Fomin}.

By multiplying the expansion of $\frac{1+(r-1)x}{(1-x)(1+rx)}$ on the right by $\mathbf{B}$, we obtain the bivariate expansion of $\frac{1+rx}{(1-x)(1+(r+1)x)}$. This coefficient array begins
$$\left(
\begin{array}{cccccccc}
 1 & 0 & 0 & 0 & 0 & 0 & 0 & 0 \\
 0 & 0 & 0 & 0 & 0 & 0 & 0 & 0 \\
 1 & 1 & 0 & 0 & 0 & 0 & 0 & 0 \\
 0 & -1 & -1 & 0 & 0 & 0 & 0 & 0 \\
 1 & 2 & 2 & 1 & 0 & 0 & 0 & 0 \\
 0 & -2 & -4 & -3 & -1 & 0 & 0 & 0 \\
 1 & 3 & 6 & 7 & 4 & 1 & 0 & 0 \\
 0 & -3 & -9 & -13 & -11 & -5 & -1 & 0 \\
\end{array}
\right).$$
This can be expressed in terms of Riordan arrays as
$$\left(\frac{1}{1-x}, 0\cdot x\right)+\left(\frac{-x}{1-x^2}, \frac{-x}{1+x}\right).$$
The reversion of this triangle, with generating function $\frac{1}{1-rx}c\left(\frac{-x(r+(r+1)x}{(1-rx)^2}\right)$, begins
$$\left(
\begin{array}{cccccccc}
 1 & 0 & 0 & 0 & 0 & 0 & 0 & 0 \\
 0 & 0 & 0 & 0 & 0 & 0 & 0 & 0 \\
 -1 & -1 & 0 & 0 & 0 & 0 & 0 & 0 \\
 -1 & 0 & 1 & 0 & 0 & 0 & 0 & 0 \\
 0 & 3 & 2 & -1 & 0 & 0 & 0 & 0 \\
 1 & 3 & -4 & -5 & 1 & 0 & 0 & 0 \\
 1 & -3 & -14 & 0 & 9 & -1 & 0 & 0 \\
 0 & -9 & -4 & 35 & 15 & -14 & 1 & 0 \\
\end{array}
\right).$$

The ordinary generating function of this triangle is then
$$\mathcal{J}(0,-r,-r,-r,\ldots; -(r+1),-(r+1),-(r+1),\ldots).$$ We can associate it via the $\mathcal{T}$ transform with the triangle whose generating function is
$$\mathcal{J}(0,-r,-2r,-3r,\ldots; -(r+1),-4(r+1),-9(r+1),16(r+1),\ldots).$$ This begins
$$\left(
\begin{array}{cccccccc}
 1 & 0 & 0 & 0 & 0 & 0 & 0 & 0 \\
 0 & 0 & 0 & 0 & 0 & 0 & 0 & 0 \\
 -1 & -1 & 0 & 0 & 0 & 0 & 0 & 0 \\
 0 & 1 & 1 & 0 & 0 & 0 & 0 & 0 \\
 5 & 10 & 4 & -1 & 0 & 0 & 0 & 0 \\
 0 & -18 & -36 & -17 & 1 & 0 & 0 & 0 \\
 -61 & -183 & -136 & 33 & 46 & -1 & 0 & 0 \\
 0 & 479 & 1437 & 1329 & 263 & -107 & 1 & 0 \\
\end{array}
\right).$$ This has generating function 
$$\frac{(r+2)e^{(r+1)x}}{1+(r+1)e^{(r+2)x}}.$$ 
Multiplying on the right by $\mathbf{B}^{-1}$, we get the triangle with generating function
$$\mathcal{J}(0,-(r-1),-2(r-1),-3(r-1),\ldots; -r,-4r,-9r,-16r,\ldots),$$ which begins
$$\left(
\begin{array}{cccccccc}
 1 & 0 & 0 & 0 & 0 & 0 & 0 & 0 \\
 0 & 0 & 0 & 0 & 0 & 0 & 0 & 0 \\
 0 & -1 & 0 & 0 & 0 & 0 & 0 & 0 \\
 0 & -1 & 1 & 0 & 0 & 0 & 0 & 0 \\
 0 & -1 & 7 & -1 & 0 & 0 & 0 & 0 \\
 0 & -1 & 21 & -21 & 1 & 0 & 0 & 0 \\
 0 & -1 & 51 & -161 & 51 & -1 & 0 & 0 \\
 0 & -1 & 113 & -813 & 813 & -113 & 1 & 0 \\
\end{array}
\right).$$ The general $(n,k)$-term of this matrix is given by 
$$(-1)^k \sum_{j=0}^n \binom{n}{j}(-1)^{n-j}E_1(j,k).$$ Its has generating function is
$$\frac{(r+1)e^{rx}}{1+re^{(r+1)x}}.$$ 
This is a signed version of \seqnum{A271697}. We note that if we multiply this now on the left by $\mathbf{B}$, we get the signed Eulerian triangle
$$\left(
\begin{array}{cccccccc}
 1 & 0 & 0 & 0 & 0 & 0 & 0 & 0 \\
 1 & 0 & 0 & 0 & 0 & 0 & 0 & 0 \\
 1 & -1 & 0 & 0 & 0 & 0 & 0 & 0 \\
 1 & -4 & 1 & 0 & 0 & 0 & 0 & 0 \\
 1 & -11 & 11 & -1 & 0 & 0 & 0 & 0 \\
 1 & -26 & 66 & -26 & 1 & 0 & 0 & 0 \\
 1 & -57 & 302 & -302 & 57 & -1 & 0 & 0 \\
 1 & -120 & 1191 & -2416 & 1191 & -120 & 1 & 0 \\
\end{array}
\right),$$ with ordinary generating function
$$\mathcal{J}(1,-(r-2),-(2r-3),-(3r-4),\ldots; -r,-4r,-9r,-16r,\ldots),$$ and exponential generating function $\frac{1+r}{e^{-x(r+1)}-r}.$
The image of this under the inverse $\mathcal{T}$ transform is the signed Narayana triangle
$$\left(\begin{array}{cccccccc}
 1 & 0 & 0 & 0 & 0 & 0 & 0 & 0 \\
 1 & 0 & 0 & 0 & 0 & 0 & 0 & 0 \\
 1 & -1 & 0 & 0 & 0 & 0 & 0 & 0 \\
 1 & -3 & 1 & 0 & 0 & 0 & 0 & 0 \\
 1 & -6 & 6 & -1 & 0 & 0 & 0 & 0 \\
 1 & -10 & 20 & -10 & 1 & 0 & 0 & 0 \\
 1 & -15 & 50 & -50 & 15 & -1 & 0 & 0 \\
 1 & -21 & 105 & -175 & 105 & -21 & 1 & 0 \\
\end{array}
\right),$$ with generating function
$$\mathcal{J}(1,1-r,1-r,1-r,\ldots;-r,-r,-r,-r,\ldots),$$ or $$\frac{1}{1-(r+1)x}c\left(\frac{rx}{(1-(r+1)x)^2}\right).$$
This reverts to the generating function
$$ \frac{1+rx}{1+(r+1)x},$$ the inverse binomial transform of $g(x)$.

The inverse Sumudu transform of $\frac{1+rx}{1+(r+1)x}$ is $\frac{r+e^{-t(r+1)}}{r+1}$. The logarithmic derivative of this is
$$-\frac{1+r}{1+r e^{t(r+1)}},$$ which expands to give the variant Eulerian triangle that begins
$$\left(
\begin{array}{cccccccc}
 -1 & 0 & 0 & 0 & 0 & 0 & 0 & 0 \\
 0 & 1 & 0 & 0 & 0 & 0 & 0 & 0 \\
 0 & 1 & -1 & 0 & 0 & 0 & 0 & 0 \\
 0 & 1 & -4 & 1 & 0 & 0 & 0 & 0 \\
 0 & 1 & -11 & 11 & -1 & 0 & 0 & 0 \\
 0 & 1 & -26 & 66 & -26 & 1 & 0 & 0 \\
 0 & 1 & -57 & 302 & -302 & 57 & -1 & 0 \\
 0 & 1 & -120 & 1191 & -2416 & 1191 & -120 & 1 \\
\end{array}
\right).$$
We have the following.
$$\int_0^z \frac{1+r}{1+r e^{t(r+1)}}\,dt = z(r+1) + \ln\left(\frac{1+r}{1+re^{z(r+1)}}\right).$$
Solving the reversion equation
$$z(r+1) + \ln\left(\frac{1+r}{1+re^{z(r+1)}}\right)=x$$ we get
$$z(x)=\frac{-\ln\left(e^{-x}-r(1-e^{-x})\right)}{1+r}.$$
Then $$z'(x)=\frac{1}{1-r(e^x-1)}$$ is the bivariate generating function of the triangle that begins
$$\left(
\begin{array}{cccccccc}
 1 & 0 & 0 & 0 & 0 & 0 & 0 & 0 \\
 0 & 1 & 0 & 0 & 0 & 0 & 0 & 0 \\
 0 & 1 & 2 & 0 & 0 & 0 & 0 & 0 \\
 0 & 1 & 6 & 6 & 0 & 0 & 0 & 0 \\
 0 & 1 & 14 & 36 & 24 & 0 & 0 & 0 \\
 0 & 1 & 30 & 150 & 240 & 120 & 0 & 0 \\
 0 & 1 & 62 & 540 & 1560 & 1800 & 720 & 0 \\
 0 & 1 & 126 & 1806 & 8400 & 16800 & 15120 & 5040 \\
\end{array}
\right).$$
This is the triangle $\left(k! S(n,k)\right)$, \seqnum{A019538}.

To complete this section, we look at the generating function $\frac{1+(r-1)x}{1+rx}$. The inverse Sumudu transform gives us $\tilde{g}(t)=\frac{e^{-rt}+r-1}{r}$, whose logarithmic derivative is $-\frac{r}{1+(r-1)e^{rt}}$. Taking the negative of this, we get $\frac{r}{1+(r-1)e^{rt}}$, the bivariate generating function for the triangle that begins
$$\left(
\begin{array}{ccccccc}
 1 & 0 & 0 & 0 & 0 & 0 & 0 \\
 1 & -1 & 0 & 0 & 0 & 0 & 0 \\
 2 & -3 & 1 & 0 & 0 & 0 & 0 \\
 6 & -12 & 7 & -1 & 0 & 0 & 0 \\
 24 & -60 & 50 & -15 & 1 & 0 & 0 \\
 120 & -360 & 390 & -180 & 31 & -1 & 0 \\
 720 & -2520 & 3360 & -2100 & 602 & -63 & 1 \\
\end{array}
\right),$$ which in the Del\'eham notation is
$$[1,-1,2,-2,3,-3,\ldots]\,\Delta\,[1,0,2,0,3,0,\ldots].$$ This is a signed version of \seqnum{A130850}. It has general term $$(n-k)!(-1)^k S(n+1,n-k+1).$$
Now $$\int_0^z \frac{r}{1+(r-1)e^{rt}}\,dt = \ln\left(e^{rz}(r-1)+1\right)+\ln(r)+rz,$$ and the solution to the reversion equation
$$\ln\left(e^{rz}(r-1)+1\right)+\ln(r)+rz =x$$ is given by
$$\frac{-\ln\left(1-r+re^{-x}\right)}{r}.$$
We then have
$$\frac{d}{dx}\frac{-\ln\left(1-r+re^{-x}\right)}{r}=\frac{e^{-x}}{1-r+re^{-x}}=\frac{1}{r-(r-1)e^x}.$$
This is the bivariate generating function of the triangle that begins
$$\left(
\begin{array}{ccccccc}
 1 & 0 & 0 & 0 & 0 & 0 & 0 \\
 -1 & 1 & 0 & 0 & 0 & 0 & 0 \\
 1 & -3 & 2 & 0 & 0 & 0 & 0 \\
 -1 & 7 & -12 & 6 & 0 & 0 & 0 \\
 1 & -15 & 50 & -60 & 24 & 0 & 0 \\
 -1 & 31 & -180 & 390 & -360 & 120 & 0 \\
 1 & -63 & 602 & -2100 & 3360 & -2520 & 720 \\
\end{array}
\right),$$ or 
$$[1,0,2,0,3,0,\ldots] \,\Delta\,[1,-1,2,-2,3,-3,\ldots].$$
This is a signed version of \seqnum{A028246}, which gives the number of $k$-dimensional faces in the first barycentric subdivision of the standard $n$-dimensional simplex.

We then have
\begin{scriptsize}
$$\left(
\begin{array}{ccccccc}
 1 & 0 & 0 & 0 & 0 & 0 & 0 \\
 -1 & 1 & 0 & 0 & 0 & 0 & 0 \\
 1 & -3 & 2 & 0 & 0 & 0 & 0 \\
 -1 & 7 & -12 & 6 & 0 & 0 & 0 \\
 1 & -15 & 50 & -60 & 24 & 0 & 0 \\
 -1 & 31 & -180 & 390 & -360 & 120 & 0 \\
 1 & -63 & 602 & -2100 & 3360 & -2520 & 720 \\
\end{array}
\right)\cdot \mathbf{B}=\left(
\begin{array}{ccccccc}
 1 & 0 & 0 & 0 & 0 & 0 & 0 \\
 0 & 1 & 0 & 0 & 0 & 0 & 0 \\
 0 & 1 & 2 & 0 & 0 & 0 & 0 \\
 0 & 1 & 6 & 6 & 0 & 0 & 0 \\
 0 & 1 & 14 & 36 & 24 & 0 & 0 \\
 0 & 1 & 30 & 150 & 240 & 120 & 0 \\
 0 & 1 & 62 & 540 & 1560 & 1800 & 720 \\
\end{array}
\right),$$
\end{scriptsize}
where $\mathbf{B}$ is the binomial matrix $\left(\binom{n}{k}\right)$. The action of $\mathbf{B}$ in this case is on the second parameter $r$ with the effect $r \to r+1$. Thus we have
$$\frac{1}{r-(r-1)e^x} \to \frac{1}{1+r(1-e^x)}.$$
The generating function $(1-x)g(x)=\frac{1+(r-1)x}{1+rx}$ is the bivariate generating function of the triangle that begins
$$\left(
\begin{array}{ccccccc}
 1 & 0 & 0 & 0 & 0 & 0 & 0 \\
 -1 & 0 & 0 & 0 & 0 & 0 & 0 \\
 0 & 1 & 0 & 0 & 0 & 0 & 0 \\
 0 & 0 & -1 & 0 & 0 & 0 & 0 \\
 0 & 0 & 0 & 1 & 0 & 0 & 0 \\
 0 & 0 & 0 & 0 & -1 & 0 & 0 \\
 0 & 0 & 0 & 0 & 0 & 1 & 0 \\
\end{array}
\right).$$
We calculate its reversion, that is, the coefficient array whose bivariate generating function is
$$\frac{1}{x} \text{Rev} \frac{1+(r-1)x}{1+rx}=\frac{1}{1-rx}c\left(\frac{(1-r)x}{(1-rx)^2}\right).$$
This gives us the triangle that begins
$$\left(
\begin{array}{cccccc}
 1 & 0 & 0 & 0 & 0 & 0 \\
 1 & 0 & 0 & 0 & 0 & 0 \\
 2 & -1 & 0 & 0 & 0 & 0 \\
 5 & -5 & 1 & 0 & 0 & 0 \\
 14 & -21 & 9 & -1 & 0 & 0 \\
 42 & -84 & 56 & -14 & 1 & 0 \\
\end{array}
\right).$$
The generating function for this triangle is then
$$\mathcal{J}(1,2-r,2-r,2-r,\ldots; 1-r,1-r,1-r,\ldots),$$ or
$$[1,-1,1,-1,\ldots]\,\Delta\,[0,1,0,1,0,\ldots].$$ It is a signed version of \seqnum{A126216}. Under the $\mathcal{T}^{-1}$ map this is transformed into
$$\mathcal{J}(1,3-r,5-2r,7-3r,\ldots; 1-r,4(1-r),9(1-r),\ldots).$$ This expands to give the triangle that begins
$$\left(
\begin{array}{ccccccc}
 1 & 0 & 0 & 0 & 0 & 0 & 0 \\
 1 & 0 & 0 & 0 & 0 & 0 & 0 \\
 2 & -1 & 0 & 0 & 0 & 0 & 0 \\
 6 & -6 & 1 & 0 & 0 & 0 & 0 \\
 24 & -36 & 14 & -1 & 0 & 0 & 0 \\
 120 & -240 & 150 & -30 & 1 & 0 & 0 \\
 720 & -1800 & 1560 & -540 & 62 & -1 & 0 \\
\end{array}
\right).$$
The general term of this array is $(n-k)!(-1)^k  S(n,n-k)$. This is a signed version of \seqnum{A090582} or
$$[1, 1, 2, 2, 3, 3, \ldots]\, \Delta \, [0, 1, 0, 2, 0, 3, 0, \ldots],$$
with generating function
$$\frac{r}{e^{-rx}+r-1}.$$
We can operate on each of these ``on the right'' by the binomial matrix to get an equivalent sequence of related matrices. Effectively, we change $r$ to $r+1$ in each of the generating functions.
Thus we start with $\frac{1+rx}{1+(r+1)x}$, which expands to the triangle
\begin{scriptsize}
$$\left(
\begin{array}{ccccccc}
 1 & 0 & 0 & 0 & 0 & 0 & 0 \\
 -1 & 0 & 0 & 0 & 0 & 0 & 0 \\
 0 & 1 & 0 & 0 & 0 & 0 & 0 \\
 0 & 0 & -1 & 0 & 0 & 0 & 0 \\
 0 & 0 & 0 & 1 & 0 & 0 & 0 \\
 0 & 0 & 0 & 0 & -1 & 0 & 0 \\
 0 & 0 & 0 & 0 & 0 & 1 & 0 \\
\end{array}
\right)\cdot \mathbf{B}=\left(
\begin{array}{ccccccc}
 1 & 0 & 0 & 0 & 0 & 0 & 0 \\
 -1 & 0 & 0 & 0 & 0 & 0 & 0 \\
 1 & 1 & 0 & 0 & 0 & 0 & 0 \\
 -1 & -2 & -1 & 0 & 0 & 0 & 0 \\
 1 & 3 & 3 & 1 & 0 & 0 & 0 \\
 -1 & -4 & -6 & -4 & -1 & 0 & 0 \\
 1 & 5 & 10 & 10 & 5 & 1 & 0 \\
\end{array}
\right).$$
\end{scriptsize}
We now have the reversion of triangles
\begin{scriptsize}
$$\left(\begin{array}{ccccccc}
 1 & 0 & 0 & 0 & 0 & 0 & 0 \\
 -1 & 0 & 0 & 0 & 0 & 0 & 0 \\
 1 & 1 & 0 & 0 & 0 & 0 & 0 \\
 -1 & -2 & -1 & 0 & 0 & 0 & 0 \\
 1 & 3 & 3 & 1 & 0 & 0 & 0 \\
 -1 & -4 & -6 & -4 & -1 & 0 & 0 \\
 1 & 5 & 10 & 10 & 5 & 1 & 0 \\
\end{array}
\right) \xrightarrow{\text{revert}} \left(
\begin{array}{ccccccc}
 1 & 0 & 0 & 0 & 0 & 0 & 0 \\
 1 & 0 & 0 & 0 & 0 & 0 & 0 \\
 1 & -1 & 0 & 0 & 0 & 0 & 0 \\
 1 & -3 & 1 & 0 & 0 & 0 & 0 \\
 1 & -6 & 6 & -1 & 0 & 0 & 0 \\
 1 & -10 & 20 & -10 & 1 & 0 & 0 \\
 1 & -15 & 50 & -50 & 15 & -1 & 0 \\
\end{array}
\right),$$
\end{scriptsize} where we have
\begin{scriptsize}
$$\left(
\begin{array}{ccccccc}
 1 & 0 & 0 & 0 & 0 & 0 & 0 \\
 1 & 0 & 0 & 0 & 0 & 0 & 0 \\
 2 & -1 & 0 & 0 & 0 & 0 & 0 \\
 5 & -5 & 1 & 0 & 0 & 0 & 0 \\
 14 & -21 & 9 & -1 & 0 & 0 & 0 \\
 42 & -84 & 56 & -14 & 1 & 0 & 0 \\
 132 & -330 & 300 & -120 & 20 & -1 & 0 \\
\end{array}
\right)\cdot \mathbf{B}=\left(
\begin{array}{ccccccc}
 1 & 0 & 0 & 0 & 0 & 0 & 0 \\
 1 & 0 & 0 & 0 & 0 & 0 & 0 \\
 1 & -1 & 0 & 0 & 0 & 0 & 0 \\
 1 & -3 & 1 & 0 & 0 & 0 & 0 \\
 1 & -6 & 6 & -1 & 0 & 0 & 0 \\
 1 & -10 & 20 & -10 & 1 & 0 & 0 \\
 1 & -15 & 50 & -50 & 15 & -1 & 0 \\
\end{array}
\right).$$
\end{scriptsize}
This is a signed version of the Narayana triangle. The generating function of this triangle is
$$\mathcal{J}(1,1-r,1-r,1-r,\ldots; -r,-r,-r,\ldots).$$ Under the $\mathcal{T}^{-1}$ transform, this is mapped to
$$\mathcal{J}(1,2-r,3-2r,4-3r,\ldots;-r,-4r,-9r,\ldots),$$ which expands to give the signed Eulerian triangle
\begin{scriptsize}
$$\left(\begin{array}{ccccccc}
 1 & 0 & 0 & 0 & 0 & 0 & 0 \\
 1 & 0 & 0 & 0 & 0 & 0 & 0 \\
 1 & -1 & 0 & 0 & 0 & 0 & 0 \\
 1 & -4 & 1 & 0 & 0 & 0 & 0 \\
 1 & -11 & 11 & -1 & 0 & 0 & 0 \\
 1 & -26 & 66 & -26 & 1 & 0 & 0 \\
 1 & -57 & 302 & -302 & 57 & -1 & 0 \\
\end{array}
\right)=\left(
\begin{array}{ccccccc}
 1 & 0 & 0 & 0 & 0 & 0 & 0 \\
 1 & 0 & 0 & 0 & 0 & 0 & 0 \\
 2 & -1 & 0 & 0 & 0 & 0 & 0 \\
 6 & -6 & 1 & 0 & 0 & 0 & 0 \\
 24 & -36 & 14 & -1 & 0 & 0 & 0 \\
 120 & -240 & 150 & -30 & 1 & 0 & 0 \\
 720 & -1800 & 1560 & -540 & 62 & -1 & 0 \\
\end{array}
\right)\cdot \mathbf{B}.$$
\end{scriptsize}
This then has the generating function $$\frac{r+1}{e^{-(r+1)x}+r}.$$
The following observation is appropriate. If we reverse the two coefficient arrays
\begin{scriptsize}
$$\left(
\begin{array}{ccccccc}
 1 & 0 & 0 & 0 & 0 & 0 & 0 \\
 0 & 1 & 0 & 0 & 0 & 0 & 0 \\
 0 & 1 & 2 & 0 & 0 & 0 & 0 \\
 0 & 1 & 6 & 6 & 0 & 0 & 0 \\
 0 & 1 & 14 & 36 & 24 & 0 & 0 \\
 0 & 1 & 30 & 150 & 240 & 120 & 0 \\
 0 & 1 & 62 & 540 & 1560 & 1800 & 720 \\
\end{array}
\right) \xrightarrow{\mathcal{T}^{-1}} \left(
\begin{array}{ccccccc}
 1 & 0 & 0 & 0 & 0 & 0 & 0 \\
 0 & 1 & 0 & 0 & 0 & 0 & 0 \\
 0 & 1 & 2 & 0 & 0 & 0 & 0 \\
 0 & 1 & 5 & 5 & 0 & 0 & 0 \\
 0 & 1 & 9 & 21 & 14 & 0 & 0 \\
 0 & 1 & 14 & 56 & 84 & 42 & 0 \\
 0 & 1 & 20 & 120 & 300 & 330 & 132 \\
\end{array}
\right), $$
\end{scriptsize} and multiply each on the right by the inverse of the binomial matrix, we obtain respectively the Euler triangle $E_1$ and the Narayana triangle $N_1$. As shown in \cite{T}, the two triangles $E_1$ and $N_1$ are paired triangles under the $\mathcal{T}$ transform.
\begin{scriptsize}
$$\left(
\begin{array}{ccccccc}
 1 & 0 & 0 & 0 & 0 & 0 & 0 \\
 1 & 0 & 0 & 0 & 0 & 0 & 0 \\
 2 & 1 & 0 & 0 & 0 & 0 & 0 \\
 6 & 6 & 1 & 0 & 0 & 0 & 0 \\
 24 & 36 & 14 & 1 & 0 & 0 & 0 \\
 120 & 240 & 150 & 30 & 1 & 0 & 0 \\
 720 & 1800 & 1560 & 540 & 62 & 1 & 0 \\
\end{array}
\right)\cdot \mathbf{B}^{-1}=\left(
\begin{array}{ccccccc}
 1 & 0 & 0 & 0 & 0 & 0 & 0 \\
 1 & 0 & 0 & 0 & 0 & 0 & 0 \\
 1 & 1 & 0 & 0 & 0 & 0 & 0 \\
 1 & 4 & 1 & 0 & 0 & 0 & 0 \\
 1 & 11 & 11 & 1 & 0 & 0 & 0 \\
 1 & 26 & 66 & 26 & 1 & 0 & 0 \\
 1 & 57 & 302 & 302 & 57 & 1 & 0 \\
\end{array}
\right).$$
$$\left(
\begin{array}{ccccccc}
 1 & 0 & 0 & 0 & 0 & 0 & 0 \\
 1 & 0 & 0 & 0 & 0 & 0 & 0 \\
 2 & 1 & 0 & 0 & 0 & 0 & 0 \\
 5 & 5 & 1 & 0 & 0 & 0 & 0 \\
 14 & 21 & 9 & 1 & 0 & 0 & 0 \\
 42 & 84 & 56 & 14 & 1 & 0 & 0 \\
 132 & 330 & 300 & 120 & 20 & 1 & 0 \\
\end{array}
\right)\cdot \mathbf{B}^{-1}=\left(
\begin{array}{ccccccc}
 1 & 0 & 0 & 0 & 0 & 0 & 0 \\
 1 & 0 & 0 & 0 & 0 & 0 & 0 \\
 1 & 1 & 0 & 0 & 0 & 0 & 0 \\
 1 & 3 & 1 & 0 & 0 & 0 & 0 \\
 1 & 6 & 6 & 1 & 0 & 0 & 0 \\
 1 & 10 & 20 & 10 & 1 & 0 & 0 \\
 1 & 15 & 50 & 50 & 15 & 1 & 0 \\
\end{array}
\right).$$
\end{scriptsize}
\begin{example} In this example, we review how, starting from a number triangle with a simple rational bivariate generating function, we can associate to it, in a reversible manner, other triangles. By the partial $\mathcal{P}$ transform, we shall understand the two steps: taking the inverse Sumudu transform, followed by taking the logarithmic derivative of this. For this example we shall start with the generating function $G(x)=\frac{1-(r+1)x}{(1-x)(1-rx)}=\frac{1-(r+1)x}{1-(r+1)x+rx^2}$. This expands to give the triangle that begins
$$\left(
\begin{array}{ccccccc}
 1 & 0 & 0 & 0 & 0 & 0 & 0 \\
 0 & 0 & 0 & 0 & 0 & 0 & 0 \\
 0 & -1 & 0 & 0 & 0 & 0 & 0 \\
 0 & -1 & -1 & 0 & 0 & 0 & 0 \\
 0 & -1 & -1 & -1 & 0 & 0 & 0 \\
 0 & -1 & -1 & -1 & -1 & 0 & 0 \\
 0 & -1 & -1 & -1 & -1 & -1 & 0 \\
\end{array}
\right).$$ 
The reversion of $G(x)$ is $\frac{1}{1+(r+1)x}c\left(\frac{x(1+r+rx)}{(1+(r+1)x)^2}\right)$ which expands to give the triangle that begins
$$\left(
\begin{array}{ccccccc}
 1 & 0 & 0 & 0 & 0 & 0 & 0 \\
 0 & 0 & 0 & 0 & 0 & 0 & 0 \\
 0 & 1 & 0 & 0 & 0 & 0 & 0 \\
 0 & 1 & 1 & 0 & 0 & 0 & 0 \\
 0 & 1 & 4 & 1 & 0 & 0 & 0 \\
 0 & 1 & 8 & 8 & 1 & 0 & 0 \\
 0 & 1 & 13 & 29 & 13 & 1 & 0 \\
\end{array}
\right).$$ This is essentially \seqnum{A100754}, \cite{Athanasiadis}. Its row sums are the Fine numbers \seqnum{A00957} \cite{Fine}. Its generating function may be represented as the continued fraction 
$$\mathcal{J}(0,r+1,r+1,\ldots; r,r,r,\ldots).$$ 
We now have 
\begin{proposition} $$\mathcal{P}\left(\frac{1-(r+1)x}{(1-x)(1-rx)}\right)=\frac{1}{1+r(e^x-1)}.$$
\end{proposition}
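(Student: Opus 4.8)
The plan is to run the pipeline \emph{backward} rather than forward, exploiting the inversion formula established earlier: if $F(z)=\mathcal{P}(g)$, then $\tilde{g}(t)=e^{t-\text{Rev}\left(\int_0^t F(z)\,dz\right)}$, and $g(x)$ is recovered as the Sumudu transform of $\tilde{g}(t)$. Accordingly, I would start from the claimed output $F(z)=\frac{1}{1+r(e^z-1)}=\frac{1}{1-r+re^z}$ and verify that this reconstruction returns the stated $g$. This mirrors exactly the strategy used in the companion proposition for $\frac{1+(r-1)x}{(1-x)(1+rx)}$.

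First I would compute $\int_0^z F(t)\,dt$. Writing $F$ as $\frac{1}{1-r+re^t}$ and substituting $u=e^t$ reduces the integral to $\int \frac{du}{u(1-r+ru)}$, a short partial-fraction calculation (with residues $\frac{1}{1-r}$ and $-\frac{1}{1-r}$) that yields
$$\int_0^z F(t)\,dt=\frac{1}{1-r}\bigl(z-\ln(1-r+re^z)\bigr).$$

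The central step, and the one I expect to be the main obstacle, is the reversion. Setting the previous expression equal to $x$ and clearing the logarithm gives $1-r+re^z=e^{z}e^{-(1-r)x}$, which is \emph{linear} in $e^z$; solving produces the explicit inverse
$$\text{Rev}\int_0^z F(t)\,dt=\ln\left(\frac{1-r}{e^{-(1-r)x}-r}\right).$$
The apparent transcendental obstruction thus collapses to a single logarithm. This linearization after exponentiation is the crux of the argument, precisely as it was in the companion proposition, and it is the only point where the particular shape of $F$ is essential.

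Finally, substituting into $\tilde{g}(t)=e^{t-\text{Rev}\left(\int_0^t F(z)\,dz\right)}$ simplifies the exponential to $\tilde{g}(t)=\frac{e^{rt}-re^t}{1-r}$, an elementary combination of exponentials. Taking the Sumudu transform term by term, using that $e^{at}$ maps to $\frac{1}{1-ax}$, gives $\frac{1}{1-r}\left(\frac{1}{1-rx}-\frac{r}{1-x}\right)$; combining over the common denominator $(1-rx)(1-x)$ and cancelling the factor $(1-r)=(1-r)\cdot 1$ that appears in the numerator (since the numerator factors as $(1-r)(1-(r+1)x)$) returns exactly $g(x)=\frac{1-(r+1)x}{(1-x)(1-rx)}$, completing the verification. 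All steps after the reversion are routine bookkeeping, so the write-up can be kept to roughly the length of the companion proof.
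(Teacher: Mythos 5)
Your proposal is correct, but it takes a genuinely different route from the paper's proof of this particular proposition. The paper runs the pipeline \emph{forward}: it computes the inverse Sumudu transform $\tilde{G}(t)=\frac{re^t-e^{rt}}{r-1}$, takes the logarithmic derivative, forms $1-h(t)=\frac{e^{rt}(r-1)}{re^t-e^{rt}}$ (pausing to identify this as the generating function of the Eulerian triangle $E_2$), integrates, reverts to get $z(x)=\frac{\ln(r-e^{-x}(r-1))}{r-1}$, and differentiates. You instead run the pipeline \emph{backward} from the claimed answer via the inversion formula $\tilde{g}(t)=e^{t-\text{Rev}\left(\int_0^t F(z)\,dz\right)}$ --- which is exactly the strategy the paper uses for the companion proposition with $g(x)=\frac{1+(r-1)x}{(1-x)(1+rx)}$, just not for this one. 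Your computations check out: the integral $\frac{1}{1-r}\bigl(z-\ln(1-r+re^z)\bigr)$, the reversion $\ln\bigl(\frac{1-r}{e^{-(1-r)x}-r}\bigr)$, the resulting $\tilde{g}(t)=\frac{e^{rt}-re^t}{1-r}$ (matching the paper's $\tilde{G}$), and the Sumudu transform back to $\frac{1-(r+1)x}{(1-x)(1-rx)}$ are all right. Two remarks. First, to conclude $\mathcal{P}(g)=F$ from the verification that the reverse pipeline sends $F$ to $g$, you are implicitly using that the reverse map is injective on the relevant class of series; this is true (each step is invertible) and the paper relies on the same logic in the companion proof, but it deserves a sentence. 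Second, your worry that the reversion would be ``the main obstacle'' is equally resolved in the forward direction: there too the equation linearizes in $e^z$, so neither direction is harder; what the forward proof buys is the combinatorially meaningful intermediate ($E_2$), while yours buys a slightly shorter chain of manipulations.
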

\begin{proof} The inverse Sumudu transform of $G(x)$ is $\tilde{G}(t)=\frac{r e^t-e^{rt}}{r-1}$. 
The logarithmic derivative of $\tilde{G}(t)$ is $\frac{r(e^t-e^{rt})}{re^t-e^{rt}}$. 
We now form $$1-\frac{r(e^t-e^{rt})}{re^t-e^{rt}}=\frac{e^{rt}(r-1)}{re^t-e^{rt}},$$ which is the generating function of the Euler triangle $E_2$ 
$$\left(
\begin{array}{ccccccc}
 1 & 0 & 0 & 0 & 0 & 0 & 0 \\
 0 & 1 & 0 & 0 & 0 & 0 & 0 \\
 0 & 1 & 1 & 0 & 0 & 0 & 0 \\
 0 & 1 & 4 & 1 & 0 & 0 & 0 \\
 0 & 1 & 11 & 11 & 1 & 0 & 0 \\
 0 & 1 & 26 & 66 & 26 & 1 & 0 \\
 0 & 1 & 57 & 302 & 302 & 57 & 1 \\
\end{array}
\right).$$ 
We then solve the equation
$$\int_0^z \frac{e^{rt}(r-1)}{re^t-e^{rt}}\,dt=x$$ to get $$z(x)=\frac{\ln(r-e^{-x}(r-1))}{r-1}.$$ 
Finally, we differentiate this last result to get $\frac{1}{1+r(e^x-1)}$. 
\end{proof}
The generating function $\mathcal{P}\left(\frac{1-(r+1)x}{(1-x)(1-rx)}\right)=\frac{1}{1+r(e^x-1)}$ expands to give the triangle that begins 
$$\left(
\begin{array}{ccccccc}
 1 & 0 & 0 & 0 & 0 & 0 & 0 \\
 0 & -1 & 0 & 0 & 0 & 0 & 0 \\
 0 & -1 & 2 & 0 & 0 & 0 & 0 \\
 0 & -1 & 6 & -6 & 0 & 0 & 0 \\
 0 & -1 & 14 & -36 & 24 & 0 & 0 \\
 0 & -1 & 30 & -150 & 240 & -120 & 0 \\
 0 & -1 & 62 & -540 & 1560 & -1800 & 720 \\
\end{array}
\right).$$ 
The ordinary generating function for this triangle takes the form of the continued fraction 
$$\mathcal{J}(-r,1-3r,2-5r,3-7r,\ldots; r(r-1),4r(r-1),16r(r-1),\ldots).$$ 
Now the $\mathcal{T}$ transform maps this triangle to the triangle given by 
$$\mathcal{J}(-r,1-2r,1-2r,1-2r,\ldots; r(r-1), r(r-1),r(r-1),\ldots).$$ 
This triangle begins 
$$\left(
\begin{array}{ccccccc}
 1 & 0 & 0 & 0 & 0 & 0 & 0 \\
 0 & -1 & 0 & 0 & 0 & 0 & 0 \\
 0 & -1 & 2 & 0 & 0 & 0 & 0 \\
 0 & -1 & 5 & -5 & 0 & 0 & 0 \\
 0 & -1 & 9 & -21 & 14 & 0 & 0 \\
 0 & -1 & 14 & -56 & 84 & -42 & 0 \\
 0 & -1 & 20 & -120 & 300 & -330 & 132 \\
\end{array}
\right).$$ 
Thus we have associated the initial triangle 
$$\left(
\begin{array}{ccccccc}
 1 & 0 & 0 & 0 & 0 & 0 & 0 \\
 0 & 0 & 0 & 0 & 0 & 0 & 0 \\
 0 & -1 & 0 & 0 & 0 & 0 & 0 \\
 0 & -1 & -1 & 0 & 0 & 0 & 0 \\
 0 & -1 & -1 & -1 & 0 & 0 & 0 \\
 0 & -1 & -1 & -1 & -1 & 0 & 0 \\
 0 & -1 & -1 & -1 & -1 & -1 & 0 \\
\end{array}
\right)$$ in a number of reversible ways (reversion, the $\mathcal{P}$ transform, the $\mathcal{T} \circ \mathcal{P}$ transform, and a partial $\mathcal{P}$ transform) to four other triangles, each with rich combinatorial interpretations.
\end{example}

\begin{example} For this example, our starting point is $g(x)=\frac{1+(r-1)x}{(1-x)(1+rx)}=\frac{1+(r-1)x}{1+(r-1)x-rx^2}$. This expands to give
$$1, 0, r, r(1 - r), r(r^2 - r + 1), r(1 - r)(r^2 + 1), r(r^4 - r^3 + r^2 - r + 1),\ldots,$$ with 
coefficient array that begins
$$\left(
\begin{array}{ccccccc}
 1 & 0 & 0 & 0 & 0 & 0 & 0 \\
 0 & 0 & 0 & 0 & 0 & 0 & 0 \\
 0 & 1 & 0 & 0 & 0 & 0 & 0 \\
 0 & 1 & -1 & 0 & 0 & 0 & 0 \\
 0 & 1 & -1 & 1 & 0 & 0 & 0 \\
 0 & 1 & -1 & 1 & -1 & 0 & 0 \\
 0 & 1 & -1 & 1 & -1 & 1 & 0 \\
\end{array}
\right).$$ 
The reversion of this triangle, with generating function 
$$\frac{1}{1-x(r-1)}c\left(\frac{x(1-r(1+x))}{(1-x(r-1))^2}\right),$$ begins 
$$\left(
\begin{array}{ccccccc}
 1 & 0 & 0 & 0 & 0 & 0 & 0 \\
 0 & 0 & 0 & 0 & 0 & 0 & 0 \\
 0 & -1 & 0 & 0 & 0 & 0 & 0 \\
 0 & -1 & 1 & 0 & 0 & 0 & 0 \\
 0 & -1 & 4 & -1 & 0 & 0 & 0 \\
 0 & -1 & 8 & -8 & 1 & 0 & 0 \\
 0 & -1 & 13 & -29 & 13 & -1 & 0 \\
\end{array}
\right).$$ 
Invoking the $\mathcal{P}$ pipeline, we have 
\begin{enumerate}
\item $\tilde{g}(t)=\frac{e^{-rt}+re^t}{r+1}$.
\item The logarithmic derivative of $\tilde{g}(t)$ is $1-\frac{r+1}{1+re^{t(r+1)}}$. 
\item We have $1-(1-\frac{r+1}{1+re^{t(r+1)}})= \frac{r+1}{1+re^{t(r+1)}}$. 
\item We have $\int_0^z \frac{r+1}{1+re^{t(r+1)}}\,dt=z(r+1)+\ln(r+1)-\ln\left(1+re^{z(r+1)}\right)$. 
\item Solving $z(r+1)+\ln(r+1)-\ln\left(1+re^{z(r+1)}\right)=x$ and differentiating gives us $\frac{1}{1+r(1-e^x)}$. 
\end{enumerate}
Thus we have 
\begin{proposition} $$\mathcal{P}\left(\frac{1+(r-1)x}{1+(r-1)x-rx^2}\right)=\frac{1}{1+r(1-e^x)}.$$
\end{proposition}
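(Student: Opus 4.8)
The plan is to exploit the observation that the denominator factors as $1+(r-1)x-rx^2=(1-x)(1+rx)$, so that the input generating function is literally the function $g(x)=\frac{1+(r-1)x}{(1-x)(1+rx)}$ already treated in the main Proposition of this section. Since the paper regards the argument of the output generating function as a dummy variable, the present claim is then an immediate restatement of that earlier result with $z$ renamed to $x$, and the shortest route is simply to record this factorization and cite the earlier Proposition. To keep the argument self-contained, however, I would also verify it directly by running the pipeline $\mathcal{P}$, since the intermediate steps displayed in the enumeration preceding the statement are exactly what make the computation transparent.

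Running the pipeline, the first step is the inverse Sumudu transform, and the cleanest way to obtain $\tilde g(t)$ is to split $g(x)$ into partial fractions, $g(x)=\frac{r}{r+1}\cdot\frac{1}{1-x}+\frac{1}{r+1}\cdot\frac{1}{1+rx}$, and then apply the fact that the inverse Sumudu transform sends $\frac{1}{1-ax}\mapsto e^{at}$; this yields $\tilde g(t)=\frac{re^t+e^{-rt}}{r+1}$. Differentiating and forming the logarithmic derivative, then multiplying numerator and denominator by $e^{rt}$ to clear the negative exponential, produces the compact form $\frac{\tilde g'(t)}{\tilde g(t)}=1-\frac{r+1}{1+re^{t(r+1)}}$. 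Subtracting this from $1$ leaves $\frac{r+1}{1+re^{t(r+1)}}$, whose antiderivative vanishing at the origin is verified by differentiation to be $(r+1)z+\ln(r+1)-\ln\bigl(1+re^{z(r+1)}\bigr)$.

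The one genuinely nontrivial step is the reversion, and this is where I expect the main obstacle to lie. I would solve $(r+1)z+\ln(r+1)-\ln(1+re^{z(r+1)})=x$ for the power series $z(x)$ vanishing at $0$. The key device is the substitution $u=e^{(r+1)z}$, which linearizes the transcendental equation: after exponentiating it becomes $\frac{(r+1)u}{1+ru}=e^x$, a linear equation in $u$ with solution $u=\frac{e^x}{r+1-re^x}$, whence $z(x)=\frac{1}{r+1}\bigl(x-\ln(r+1-re^x)\bigr)$. Differentiating gives $z'(x)=\frac{1}{r+1}\bigl(1+\frac{re^x}{r+1-re^x}\bigr)=\frac{1}{r+1-re^x}$, and since $r+1-re^x=1+r(1-e^x)$ this is exactly $\frac{1}{1+r(1-e^x)}$, completing the verification. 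The transcendental reversion equation is the only place where care is needed, but the exponential substitution reduces it to elementary algebra, so no serious difficulty remains.
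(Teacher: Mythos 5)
Your proposal is correct and follows essentially the same route as the paper: the enumerated pipeline computation immediately preceding the proposition in the text produces exactly your intermediate expressions $\tilde g(t)=\frac{e^{-rt}+re^t}{r+1}$, the logarithmic derivative $1-\frac{r+1}{1+re^{t(r+1)}}$, and the antiderivative $z(r+1)+\ln(r+1)-\ln\bigl(1+re^{z(r+1)}\bigr)$, and the paper likewise notes at the outset that $1+(r-1)x-rx^2=(1-x)(1+rx)$ ties this to the earlier proposition. Your explicit treatment of the reversion via the substitution $u=e^{(r+1)z}$ merely fills in a step the paper leaves as ``solving and differentiating,'' and it checks out.
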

The generating function $\frac{1}{1+r(1-e^x)}$ expands to give 

$$\left(
\begin{array}{ccccccc}
 1 & 0 & 0 & 0 & 0 & 0 & 0 \\
 0 & 1 & 0 & 0 & 0 & 0 & 0 \\
 0 & 1 & 2 & 0 & 0 & 0 & 0 \\
 0 & 1 & 6 & 6 & 0 & 0 & 0 \\
 0 & 1 & 14 & 36 & 24 & 0 & 0 \\
 0 & 1 & 30 & 150 & 240 & 120 & 0 \\
 0 & 1 & 62 & 540 & 1560 & 1800 & 720 \\
\end{array}
\right).$$ 
Note that at the intermediate stage, the generating function $\frac{r+1}{1+r e^{t(r+1)}}$ expands to give the signed Eulerian triangle that begins
$$\left(
\begin{array}{ccccccc}
 1 & 0 & 0 & 0 & 0 & 0 & 0 \\
 0 & -1 & 0 & 0 & 0 & 0 & 0 \\
 0 & -1 & 1 & 0 & 0 & 0 & 0 \\
 0 & -1 & 4 & -1 & 0 & 0 & 0 \\
 0 & -1 & 11 & -11 & 1 & 0 & 0 \\
 0 & -1 & 26 & -66 & 26 & -1 & 0 \\
 0 & -1 & 57 & -302 & 302 & -57 & 1 \\
\end{array}
\right).$$

We note now that letting $r \to r+1$ brings us from $\frac{1+(r-1)}{1+(r-1)x-rx^2}$ to $\frac{1+rx}{1+rx-(r+1)x^2}$. Then 
$$\mathcal{P}\left(\frac{1+rx}{1+rx-(r+1)x^2}\right)=\frac{1}{1+(r+1)(1-e^x)}.$$ 
This last generating function expands to give 
$$\left(
\begin{array}{ccccccc}
 1 & 0 & 0 & 0 & 0 & 0 & 0 \\
 1 & 1 & 0 & 0 & 0 & 0 & 0 \\
 3 & 5 & 2 & 0 & 0 & 0 & 0 \\
 13 & 31 & 24 & 6 & 0 & 0 & 0 \\
 75 & 233 & 266 & 132 & 24 & 0 & 0 \\
 541 & 2071 & 3120 & 2310 & 840 & 120 & 0 \\
 4683 & 21305 & 39842 & 39180 & 21360 & 6120 & 720 \\
\end{array}
\right),$$ which is 
$$\left(
\begin{array}{ccccccc}
 1 & 0 & 0 & 0 & 0 & 0 & 0 \\
 0 & 1 & 0 & 0 & 0 & 0 & 0 \\
 0 & 1 & 2 & 0 & 0 & 0 & 0 \\
 0 & 1 & 6 & 6 & 0 & 0 & 0 \\
 0 & 1 & 14 & 36 & 24 & 0 & 0 \\
 0 & 1 & 30 & 150 & 240 & 120 & 0 \\
 0 & 1 & 62 & 540 & 1560 & 1800 & 720 \\
\end{array}
\right)\cdot \mathbf{B}.$$ 
Note that again we see that $\mathcal{P}\left(\frac{1}{1-x^2}\right)=\frac{1}{2-e^x}$ by letting $r=0$. 
Setting $r=1$ shows that $\mathcal{P}\left(\frac{1+x}{1+x-2x^2}\right)=\frac{1}{3-2x^2}$. Thus the sequence \seqnum{A151575} which begins 
$$1, 0, 2, -2, 6, -10, 22, -42, 86, -170, 342,\ldots$$ is mapped by $\mathcal{P}$ to the sequence \seqnum{A004123} which begins 
$$1, 2, 10, 74, 730, 9002, 133210,\ldots.$$ The sequence \seqnum{A151575} is a signed version of \seqnum{A078008}, which counts closed walks starting and ending at the same vertex of a triangle. The sequence \seqnum{A004123} is associated to generalizations of the permutahedron \cite{Santocanale}.
\end{example}
\begin{example} The examples of $g(x)$ that we have worked with so far have expanded to sequences that begin $1,0,\ldots$, and the pipeline $\mathcal{P}$ has been designed to work with this in mind. In the following example, our initial $g(x)$ does not follow this pattern, so we modify the pipeline appropriately.

We take $g(x)=\frac{1-(r+1)x}{1-x}$. This expands to give the sequence 
$$1, -r, -r, -r, -r, -r, -r,\ldots.$$ The reversion of $g(x)$ is given by 
$$\frac{1}{1+x}c\left(\frac{(r+1)x}{(1+x)^2}\right).$$ This expands to give the number triangle that begins
$$\left(
\begin{array}{ccccccc}
 1 & 0 & 0 & 0 & 0 & 0 & 0 \\
 0 & 1 & 0 & 0 & 0 & 0 & 0 \\
 0 & 1 & 2 & 0 & 0 & 0 & 0 \\
 0 & 1 & 5 & 5 & 0 & 0 & 0 \\
 0 & 1 & 9 & 21 & 14 & 0 & 0 \\
 0 & 1 & 14 & 56 & 84 & 42 & 0 \\
 0 & 1 & 20 & 120 & 300 & 330 & 132 \\
\end{array}
\right).$$ 
As we have seen, row $n+1$ of this triangle is the $f$-vector of the simplicial complex dual to an associahedron of type $A_n$. 

Taking the inverse Sumudu transform of $g(x)$, we get the exponential generating function $\tilde{g}(t)=1+r(1-e^t)$, whose logarithmic derivative is
$$\frac{re^t}{re^t-r-1}.$$ 
We now form 
$$1- \int_0^z \frac{re^t}{re^t-r-1}\,dt=1+i \pi-\ln\left(r(e^x-1)-1\right).$$
This expands to give the number triangle that begins 
$$\left(
\begin{array}{ccccccc}
 1 & 0 & 0 & 0 & 0 & 0 & 0 \\
 0 & 1 & 0 & 0 & 0 & 0 & 0 \\
 0 & 1 & 1 & 0 & 0 & 0 & 0 \\
 0 & 1 & 3 & 2 & 0 & 0 & 0 \\
 0 & 1 & 7 & 12 & 6 & 0 & 0 \\
 0 & 1 & 15 & 50 & 60 & 24 & 0 \\
 0 & 1 & 31 & 180 & 390 & 360 & 120 \\
\end{array}
\right).$$ 
This is an extended form of \seqnum{A028246}, whose $(n,k)$-element gives the number of $k$-dimensional faces in the first barycentric subdivision of the standard $n$-dimensional simplex \cite{Brenti}.
The reversion of this triangle begins 
$$\left(
\begin{array}{cccccccc}
 1 & 0 & 0 & 0 & 0 & 0 & 0 & 0 \\
 0 & -1 & 0 & 0 & 0 & 0 & 0 & 0 \\
 0 & -1 & 2 & 0 & 0 & 0 & 0 & 0 \\
 0 & -1 & 7 & -7 & 0 & 0 & 0 & 0 \\
 0 & -1 & 18 & -52 & 34 & 0 & 0 & 0 \\
 0 & -1 & 41 & -253 & 437 & -213 & 0 & 0 \\
 0 & -1 & 88 & -1020 & 3453 & -4203 & 1630 & 0 \\
 0 & -1 & 183 & -3707 & 21670 & -49044 & 45783 & -14747 \\
\end{array}
\right).$$
The sequence on the diagonal is an alternating sign version of \seqnum{A074059}, which gives the dimensions of the cohomology ring of the moduli space of $n$-pointed curves of genus $0$ satisfying the associativity equations of physics. If we reverse this triangle and then get the reversion of the resulting triangle, we get the triangle that begins 
$$\left(
\begin{array}{ccccccc}
 1 & 0 & 0 & 0 & 0 & 0 & 0 \\
 1 & 0 & 0 & 0 & 0 & 0 & 0 \\
 1 & 1 & 0 & 0 & 0 & 0 & 0 \\
 2 & 3 & 1 & 0 & 0 & 0 & 0 \\
 6 & 12 & 7 & 1 & 0 & 0 & 0 \\
 24 & 60 & 50 & 15 & 1 & 0 & 0 \\
 120 & 360 & 390 & 180 & 31 & 1 & 0 \\
\end{array}
\right).$$ This has generating function 
$$1+ \ln\left(\frac{r}{r+1-e^{rx}}\right).$$ 
Beheading this array gives the array 
$$\left(
\begin{array}{ccccccc}
 1 & 0 & 0 & 0 & 0 & 0 & 0 \\
 1 & 1 & 0 & 0 & 0 & 0 & 0 \\
 2 & 3 & 1 & 0 & 0 & 0 & 0 \\
 6 & 12 & 7 & 1 & 0 & 0 & 0 \\
 24 & 60 & 50 & 15 & 1 & 0 & 0 \\
 120 & 360 & 390 & 180 & 31 & 1 & 0 \\
 720 & 2520 & 3360 & 2100 & 602 & 63 & 1 \\
\end{array}
\right),$$ which is \seqnum{A130850}. This is 
$$[1,1,2,2,3,3,\ldots]\, \Delta\, [1,0,2,0,3,0,\ldots],$$ with general term 
$$\sum_{i=0}^{n-k} (-1)^{n-i-k} \binom{n-k}{i}(i+1)^n,$$ and exponential generating function 
$$\frac{r}{(r+1)e^{-rx}-1}.$$ Its ordinary generating function is
$$\mathcal{J}(r+1,2r+3,3r+5,\ldots;r+1,4(r+1),9(r+1),\ldots).$$ 
The $\mathcal{T}$ transform thus maps it to 
$$\mathcal{J}(r+1,r+2,r+2,\ldots;r+1,r+1,r+1,\ldots),$$ which expands to give the triangle that begins
$$\left(
\begin{array}{ccccccc}
 1 & 0 & 0 & 0 & 0 & 0 & 0 \\
 1 & 1 & 0 & 0 & 0 & 0 & 0 \\
 2 & 3 & 1 & 0 & 0 & 0 & 0 \\
 5 & 10 & 6 & 1 & 0 & 0 & 0 \\
 14 & 35 & 30 & 10 & 1 & 0 & 0 \\
 42 & 126 & 140 & 70 & 15 & 1 & 0 \\
 132 & 462 & 630 & 420 & 140 & 21 & 1 \\
\end{array}
\right).$$ This is \seqnum{A060693}, whose $(n,k)$-element counts Schr\"oder paths of length $2n$ with $k$ peaks \cite{Novelli}. This is $N_2 \cdot \mathbf{B}$, which is equal to
$$[1,1,1,1,1,1,\ldots] \,\Delta\,[1,0,1,0,1,0,\ldots].$$ 
\end{example}
\section{\'Etude II}
In this section, we apply the pipeline $\mathcal{P}$ defined above to the family of sequences with ordinary generating function $$g(x)=\frac{1-2x}{1-2x-rx^2}=1-\frac{rx^2}{1-2x-rx^2}=\left(1, \frac{x^2}{1-2x}\right)\cdot \frac{1}{1-rx}.$$ Thus $g(x)=g(x,r)$ expands to give the sequence
$$a_n(r)=\sum_{k=0}^{\lfloor \frac{n}{2} \rfloor} \binom{n-k-1}{n-2k}2^{n-2k}r^k,$$ which begins
$$1, 0, r, 2r, r^2 + 4r, 4r^2 + 8r, r^3 + 12r^2 + 16r,\ldots$$ with coefficient array that begins
$$\left(
\begin{array}{ccccccc}
 1 & 0 & 0 & 0 & 0 & 0 & 0 \\
 0 & 0 & 0 & 0 & 0 & 0 & 0 \\
 0 & 1 & 0 & 0 & 0 & 0 & 0 \\
 0 & 2 & 0 & 0 & 0 & 0 & 0 \\
 0 & 4 & 1 & 0 & 0 & 0 & 0 \\
 0 & 8 & 4 & 0 & 0 & 0 & 0 \\
 0 & 16 & 12 & 1 & 0 & 0 & 0 \\
\end{array}
\right).$$
For $r=0\dots 3$ these sequences are, respectively
$$1, 0, 0, 0, 0, 0, 0, 0, 0, 0, 0,\ldots,$$
$$1, 0, 1, 2, 5, 12, 29, 70, 169, 408, 985,\ldots,$$
$$1, 0, 2, 4, 12, 32, 88, 240, 656, 1792, 4896,\ldots$$
$$1, 0, 3, 6, 21, 60, 183, 546, 1641, 4920, 14763,\ldots.$$
The second sequence is a variant of the Pell numbers, while the last sequence \seqnum{A054878} counts the number of closed walks of length $n$ along the edges of a tetrahedron based at a vertex.
The inverse binomial transform of $a_n(r)$ has generating function $\frac{1-x}{1-(r+1)x^2}$ and begins
$$1, -1, r + 1, -(r + 1), (r + 1)^2, - (r + 1)^2, (r + 1)^3, - (r + 1)^3, (r + 1)^4, - (r + 1)^4, (r + 1)^5,\ldots.$$
The INVERT$(-1)$ transform of $g(x)$, that is, $\frac{g(x)}{1-xg(x)}$, expands to give the sequence that begins
$$1, 1, r + 1, 4r + 1, r^2 + 11r + 1, 7r^2 + 26r + 1, r^3 + 30r^2 + 57r + 1, 10r^3 + 102r^2 + 120r + 1,\ldots.$$
For $r=0\ldots3$ we get the sequence
$$1, 1, 1, 1, 1, 1, 1, 1, 1, 1, 1,\dots,$$
$$1, 1, 2, 5, 13, 34, 89, 233, 610, 1597, 4181,\ldots,\quad \seqnum{A001519}$$
$$1, 1, 3, 9, 27, 81, 243, 729, 2187, 6561, 19683,\ldots, \quad \seqnum{A133494}$$ and
$$1, 1, 4, 13, 43, 142, 469, 1549, 5116, 16897, 55807,\ldots,\quad \seqnum{A003688}$$
For instance, for $r=1$ we obtain the sequence \seqnum{A001519}, essentially a bisection of the Fibonacci numbers.
The reversion of the above triangle has generating function
$$\frac{1}{1+2x}c\left(\frac{x(2-rx)}{(1+2x)^2}\right).$$ This expands to give the triangle that begins
$$\left(
\begin{array}{ccccccc}
 1 & 0 & 0 & 0 & 0 & 0 & 0 \\
 0 & 0 & 0 & 0 & 0 & 0 & 0 \\
 0 & -1 & 0 & 0 & 0 & 0 & 0 \\
 0 & -2 & 0 & 0 & 0 & 0 & 0 \\
 0 & -4 & 2 & 0 & 0 & 0 & 0 \\
 0 & -8 & 10 & 0 & 0 & 0 & 0 \\
 0 & -16 & 36 & -5 & 0 & 0 & 0 \\
\end{array}
\right).$$ The above generating function is equal to
$$\mathcal{J}(0,2,2,2,\ldots;-r,-r,-r,\ldots).$$ This triangle is a stretched version of the triangle that begins
$$\left(
\begin{array}{ccccccc}
 1 & 0 & 0 & 0 & 0 & 0 & 0 \\
 0 & -1 & 0 & 0 & 0 & 0 & 0 \\
 0 & -2 & 2 & 0 & 0 & 0 & 0 \\
 0 & -4 & 10 & -5 & 0 & 0 & 0 \\
 0 & -8 & 36 & -42 & 14 & 0 & 0 \\
 0 & -16 & 112 & -224 & 168 & -42 & 0 \\
 0 & -32 & 320 & -960 & 1200 & -660 & 132 \\
\end{array}
\right).$$ This is a signed scaled version of \seqnum{A086810}. It has general term
$$\frac{(-1)^k 2^{n-k}}{n+1}\binom{n-1}{n-k}\binom{n+k}{k}.$$  Its generating function is given by
$$\mathcal{S}(-r,-r,-r,\ldots;2-r,2-r,2-r,\ldots),$$ or equivalently
$$\mathcal{J}(-r,2(1-r),2(1-r),\ldots;r(r-2),r(r-2),r(r-2),\ldots).$$
\begin{proposition} We have $$\mathcal{P}(g(x))=\frac{1}{\sqrt{1+r(1- e^{2x})}}.$$
\end{proposition}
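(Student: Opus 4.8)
The plan is to run the pipeline $\mathcal{P}$ in the forward direction, exploiting the identity from the general Proposition that the series fed into the reversion is exactly $z-\ln\tilde{g}(z)$, and then to finesse the reversion (which is non-elementary) by differentiating implicitly rather than inverting in closed form. First I would compute the inverse Sumudu transform $\tilde{g}(t)$. Since the denominator of $g$ factors as $1-2x-rx^2=(1-\lambda_+x)(1-\lambda_-x)$ with $\lambda_\pm=1\pm\sqrt{1+r}$, a partial-fraction decomposition together with the fact that the inverse Sumudu transform of $\tfrac{1}{1-\lambda x}$ is $e^{\lambda t}$ gives
$$\tilde{g}(t)=\tfrac12\Bigl(1-\tfrac1s\Bigr)e^{(1+s)t}+\tfrac12\Bigl(1+\tfrac1s\Bigr)e^{(1-s)t}=e^{t}\Bigl(\cosh(st)-\tfrac1s\sinh(st)\Bigr),$$
where $s=\sqrt{1+r}$. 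Equivalently, one solves the recurrence $a_n=2a_{n-1}+ra_{n-2}$ with initial data $a_0=1$, $a_1=0$ dictated by the numerator $1-2x$, and reads off the exponential generating function.

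Next I would use the displayed identity $\int_0^z\bigl(1-\tilde{g}'/\tilde{g}\bigr)\,dt=z-\ln\tilde{g}(z)$ established in the general Proposition. Since $\ln\tilde{g}(z)=z+\ln\!\bigl(\cosh(sz)-s^{-1}\sinh(sz)\bigr)$, the leading $z$ cancels and the object to be reverted collapses to
$$P(z)=z-\ln\tilde{g}(z)=-\ln\!\Bigl(\cosh(sz)-\tfrac1s\sinh(sz)\Bigr).$$
Thus $\mathcal{P}(g)=\tfrac{d}{dx}\text{Rev}(P)$, and the reverted series $\zeta=\text{Rev}(P)$ is characterised implicitly by $\cosh(s\zeta)-s^{-1}\sinh(s\zeta)=e^{-x}$, with $\zeta(0)=0$.

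The hard part is this reversion, since solving the defining equation for $\zeta$ explicitly introduces nested logarithms and square roots. I would sidestep it by differentiating the relation $\cosh(s\zeta)-s^{-1}\sinh(s\zeta)=e^{-x}$ with respect to $x$, which yields $(s\sinh(s\zeta)-\cosh(s\zeta))\zeta'=-e^{-x}$ and hence
$$\mathcal{P}(g)=\zeta'(x)=\frac{e^{-x}}{\cosh(s\zeta)-s\sinh(s\zeta)}.$$
It then remains to express the denominator in $x$ alone. Writing $C=\cosh(s\zeta)$ and $S=\sinh(s\zeta)$, I would combine the defining equation $C-S/s=e^{-x}$ with the identity $C^2-S^2=1$ to obtain a quadratic for $S$; substituting its solution into $C-sS=e^{-x}-\tfrac{r}{s}S$ (using $1-s^2=-r$) collapses the denominator to $\sqrt{(1+r)e^{-2x}-r}$. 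The branch of the square root is pinned down by the initial value $\zeta(0)=0$, which forces $C-sS=1$ at $x=0$ and so selects the positive root. Finally, simplifying $e^{-x}/\sqrt{(1+r)e^{-2x}-r}=1/\sqrt{1+r-re^{2x}}$ delivers $\mathcal{P}(g)=\bigl(1+r(1-e^{2x})\bigr)^{-1/2}$, as claimed. The only subtlety to monitor throughout is this sign/branch choice, which the initial condition cleanly resolves.
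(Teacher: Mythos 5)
Your proof is correct, but it runs in the opposite direction to the paper's. The paper \emph{verifies} the claim by reversing the pipeline: it starts from the asserted answer $F(z)=\bigl(1+r-re^{2z}\bigr)^{-1/2}$, computes $\int_0^t F$, reverts that integral explicitly, forms $\tilde{g}(t)=e^{t-\mathrm{Rev}(\int_0^t F)}$ via the general inversion formula $\ln\tilde{g}(z)=z-\mathrm{Rev}\int_0^z F$, lands on the same $e^t\bigl(\cosh(st)-s^{-1}\sinh(st)\bigr)$ that you obtain, and then takes the Sumudu transform to recover $g(x)=\frac{1-2x}{1-2x-rx^2}$. You instead run the pipeline \emph{forward}: the same partial-fraction/recurrence computation of $\tilde{g}$, the collapse of $z-\ln\tilde{g}(z)$ to $-\ln\bigl(\cosh(sz)-s^{-1}\sinh(sz)\bigr)$, and then --- this is the genuinely different ingredient --- implicit differentiation of the defining relation $\cosh(s\zeta)-s^{-1}\sinh(s\zeta)=e^{-x}$ combined with $\cosh^2-\sinh^2=1$ to evaluate $\zeta'$ without ever writing $\zeta$ in closed form. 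What your route buys is that it \emph{derives} the answer rather than checking it (so it would work even if the target were not known in advance), and it sidesteps the branch-laden explicit reversion that the paper must perform; what the paper's route buys is brevity, since the backward direction reduces everything to one elementary integral and one Sumudu transform. Your branch analysis at $x=0$ (forcing $C-sS=+\sqrt{(1+r)e^{-2x}-r}$) is the one point that needs care, and you handle it correctly.
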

\begin{proof}
We calculate
$$\tilde{g}(t)=e^{t-\text{Rev}\left(\int_0^t \frac{1}{\sqrt{1+r-r e^{2x}}}\,dx\right)}.$$
We have
$$=\int_0^t \frac{1}{\sqrt{1+r-r e^{2x}}}\,dx=\frac{1}{\sqrt{r+1}}\left(\ln(\sqrt{-re^{2t}+r+1}-\sqrt{r+1})-\ln(1-\sqrt{r+1})-t\right).$$
We now solve for $x=x(t)$ in
$$\frac{1}{\sqrt{r+1}}\left(\ln(\sqrt{-re^{2x}+r+1}-\sqrt{r+1})-\ln(1-\sqrt{r+1})-x\right)=t.$$
We then arrive at
$$\tilde{g}(t)=e^{t-x(t)}=e^t \left(\cosh(\sqrt{1+r}t)-\frac{\sinh(\sqrt{1+r}t)}{\sqrt{1+r}}\right).$$
Taking the Sumudu transform of $\tilde{g}(t)$ now gives $g(x)=\frac{1-2x}{1-2x-rx^2}$ as required.
\end{proof}
The generating function $\frac{1}{\sqrt{1+r-r e^{2x}}}$ expands to give the Galton-type triangle \cite{Galton} that begins
$$\left(
\begin{array}{cccccccc}
 1 & 0 & 0 & 0 & 0 & 0 & 0 & 0 \\
 0 & 1 & 0 & 0 & 0 & 0 & 0 & 0 \\
 0 & 2 & 3 & 0 & 0 & 0 & 0 & 0 \\
 0 & 4 & 18 & 15 & 0 & 0 & 0 & 0 \\
 0 & 8 & 84 & 180 & 105 & 0 & 0 & 0 \\
 0 & 16 & 360 & 1500 & 2100 & 945 & 0 & 0 \\
 0 & 32 & 1488 & 10800 & 27300 & 28350 & 10395 & 0 \\
 0 & 64 & 6048 & 72240 & 294000 & 529200 & 436590 & 135135 \\
\end{array}
\right).$$ This is \seqnum{A211402}.
Its reversion begins
$$\left(
\begin{array}{ccccccc}
 1 & 0 & 0 & 0 & 0 & 0 & 0 \\
 0 & -1 & 0 & 0 & 0 & 0 & 0 \\
 0 & -2 & 0 & 0 & 0 & 0 & 0 \\
 0 & -4 & 2 & 0 & 0 & 0 & 0 \\
 0 & -8 & 16 & 0 & 0 & 0 & 0 \\
 0 & -16 & 88 & -16 & 0 & 0 & 0 \\
 0 & -32 & 416 & -272 & 0 & 0 & 0 \\
\end{array}
\right).$$
The ordinary generating function of the above Galton triangle is
$$\mathcal{S}(r,3r,5r,\ldots; 2(r+1),4(r+1),6(r+1)\ldots),$$ or equivalently
$$\mathcal{J}(r, 5r+2, 9r+4, 13r+6,\ldots; 1\cdot 2r(r+1), 3\cdot4r(r+1), 5\cdot 6r(r+1),7\cdot 8r(r+1),\ldots).$$ In the Del\'eham notation, it is
$$[0,2,0,4,0,6,0,\ldots] \, \Delta \, [1,2,3,4,5,\ldots].$$

Note that the triangle

$$[0,2,0,4,0,6,0,\ldots] \, \Delta \, [1,0,1,0,1,\ldots]$$ is the exponential Riordan array 
$$\left[1, \frac{1}{2} (e^{2x}-1)\right]$$ of generalized Stirling numbers $S2(n,k)$ of the second kind. This is \seqnum{A075497}. The above Galton array then has general element $(2k-1)!!S2(n,k)$.

\begin{proposition}
We have, for $r\ne 0$, that
$$\mathcal{P}\left(\frac{1-2x}{1-2x-rx^2}\right)=\frac{1}{1+r(1-e^{2x})}$$ is the moment sequence for the family of orthogonal polynomials whose coefficient array is given by the exponential Riordan array
$$\left[\frac{1}{1+2rx}, \frac{1}{2}\ln\left(\frac{1+2(r+1)x}{1+2rx}\right)\right].$$
These moments appear as the initial column elements in the inverse array
$$\left[\frac{1}{1+r(1-e^{2x})},\frac{e^{2x}-1}{2(1+r(1-e^{2x}))}\right].$$
\end{proposition}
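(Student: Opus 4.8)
The plan is to follow the template of the earlier moment proposition (the one with coefficient array $\left[\frac{1}{1+rz}, \ln\left(\frac{1+(r+1)z}{1+rz}\right)\right]$): set $M=[g,f]$ to be the proposed moment array, exhibit its production matrix as tridiagonal through the associated $A$- and $Z$-functions, and then invoke the standard correspondence between tridiagonal production matrices and orthogonal polynomial systems.

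First I would confirm the inverse-array claim. Writing $[g,f]=\left[\frac{1}{1+r(1-e^{2x})},\frac{e^{2x}-1}{2(1+r(1-e^{2x}))}\right]$ and using the exponential Riordan inversion rule $[g,f]^{-1}=\left[1/(g\circ\bar{f}),\bar{f}\right]$, I would solve $f(\bar{f}(x))=x$. Setting $u=e^{2\bar{f}(x)}$ reduces this to a linear equation in $u$, giving $u=\frac{1+2(r+1)x}{1+2rx}$, hence $\bar{f}(x)=\frac{1}{2}\ln\left(\frac{1+2(r+1)x}{1+2rx}\right)$ together with the companion identity $1+r-r\,e^{2\bar{f}(x)}=\frac{1}{1+2rx}$. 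Substituting these into $g$ yields $g(\bar{f}(x))=1+2rx$, so $1/(g\circ\bar{f})=\frac{1}{1+2rx}$, which recovers the stated coefficient array $\left[\frac{1}{1+2rx},\frac{1}{2}\ln\left(\frac{1+2(r+1)x}{1+2rx}\right)\right]$.

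Next I would compute the production matrix of $M$ via $A(x)=f'(\bar{f}(x))$ and $Z(x)=(g'/g)(\bar{f}(x))$, exactly as in the earlier proposition. A direct differentiation gives $f'(x)=\frac{e^{2x}}{(1+r-re^{2x})^2}$ and $\frac{g'(x)}{g(x)}=\frac{2re^{2x}}{1+r-re^{2x}}$. Evaluating at $\bar{f}(x)$ and applying the two identities above collapses the exponentials, leaving
$$A(x)=(1+2rx)(1+2(r+1)x),\qquad Z(x)=2r(1+2(r+1)x).$$
Since $A$ is quadratic and $Z$ is linear, the production matrix, whose bivariate exponential generating function is $e^{xy}(Z(x)+yA(x))$, is tridiagonal. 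By the same theory invoked earlier, a tridiagonal production matrix forces $M^{-1}$ to be the coefficient array of a family of orthogonal polynomials, with the first column of $M$---whose generating function $\frac{1}{1+r(1-e^{2x})}$ equals $\mathcal{P}\left(\frac{1-2x}{1-2x-rx^2}\right)$ by the preceding proposition---serving as the moment sequence.

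The hard part lies entirely in the evaluation of $A$ and $Z$ at $\bar{f}(x)$: one must recognize that the single identity $1+r-r\,e^{2\bar{f}(x)}=\frac{1}{1+2rx}$ (together with the explicit form of $e^{2\bar{f}(x)}$) is precisely what clears every exponential and reveals the polynomial structure. Once that identity is in hand, the degrees of $A$ and $Z$ are immediate and the orthogonality conclusion is routine; the remaining check that the moments coincide with the pipeline image is already supplied by the previous proposition.
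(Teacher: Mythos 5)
Your proof is correct and follows essentially the same route as the paper: identify the moment array $[g,f]$, compute $A=f'\circ\bar f$ and $Z=(g'/g)\circ\bar f$, and conclude orthogonality from the tridiagonality of the production matrix; your explicit verification of $\bar f$ and of the inverse-array claim is a welcome addition that the paper leaves implicit. One point of divergence deserves comment: you obtain $Z(x)=2r(1+2(r+1)x)$ where the paper prints $Z(x)=r(1+2(r+1)x)$, and your value is the correct one. Indeed, with $g=\frac{1}{1+r-re^{2x}}$ one has $g'/g=\frac{2re^{2x}}{1+r-re^{2x}}$, and the identities $e^{2\bar f(x)}=\frac{1+2(r+1)x}{1+2rx}$ and $1+r-re^{2\bar f(x)}=\frac{1}{1+2rx}$ give $Z(0)=2r$, which is what it must be, since the first moment of $\frac{1}{1+r(1-e^{2x})}$ is $\mu_1=2r$. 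The paper's printed $Z$, its production matrix (whose $(0,0)$ entry is $r$), and the recurrence $P_1=x-r$ in the ensuing corollary have evidently dropped the factor $2$ arising from $\frac{d}{dx}e^{2x}$. This does not affect the qualitative conclusion --- $Z$ is still linear and $A$ still quadratic, so the production matrix remains tridiagonal and the orthogonality argument goes through unchanged --- but your constants are the ones consistent with the stated moment generating function.
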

\begin{proof}. We let $[g,f]=\left[\frac{1}{1+r(1-e^{2x})},\frac{e^{2x}-1}{2(1+r(1-e^{2x}))}\right]$. We find that
$$A(x)=f'(\bar{f}(x))=(1+2rx)(1+2(r+1)x),$$ and
$$Z(x)=\frac{g'(\bar{f}(x))}{g(\bar{f}(x))} = r(1+2(r+1)x).$$
Thus the production matrix of $[g,f]$ is tri-diagonal and $[g,f]^{-1}$ is the coefficient array of a family of orthogonal polynomials.
\end{proof}
The production matrix has generating function
$$ e^{xy}(r(1+2(r+1)x)+y(1+2rx)(1+2(r+1)x)).$$
It begins
$$\left(
\begin{array}{cccccc}
 r & 1 & 0 & 0 & 0 & 0 \\
 2 r (r+1) & 5 r+2 & 1 & 0 & 0 & 0 \\
 0 & 12 r (r+1) & 9 r+4 & 1 & 0 & 0 \\
 0 & 0 & 30 r (r+1) & 13 r+6 & 1 & 0 \\
 0 & 0 & 0 & 56 r (r+1) & 17 r+8 & 1 \\
 0 & 0 & 0 & 0 & 90 r (r+1) & 21 r+10 \\
\end{array}
\right).$$
\begin{corollary} $\mathcal{P}\left(\frac{1-2x}{1-2x-rx^2}\right)=\frac{1}{1+r(1-e^{2x})}$ is the generating function of the moments of the family of orthogonal polynomials $P_n(x;r)$ that satisfy the three-term recurrence
$$P_n(x;r)=(x-(r+(n-1)(4r+2)))P_{n-1}(x;r)-(n-1)(2n-3)2r(r+1)P_{n-2}(x;r),$$
with $P_0(x;r)=1$, $P_1(x;r)=x-r$.
\end{corollary}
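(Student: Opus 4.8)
The plan is to read the three-term recurrence directly off the tridiagonal production matrix furnished by the preceding Proposition. The governing principle is the standard dictionary between a moment array and its orthogonal polynomials: if the moment matrix $[g,f]$ has a tridiagonal production (Stieltjes) matrix whose main-diagonal entries are $b_0, b_1, b_2, \ldots$ and whose subdiagonal entry in row $m$ (column $m-1$) is $\lambda_m$, then the associated monic orthogonal polynomials obey
$$P_{n+1}(x;r)=(x-b_n)P_n(x;r)-\lambda_n P_{n-1}(x;r),$$
with $P_0(x;r)=1$ and $P_1(x;r)=x-b_0$. The Corollary then reduces to identifying $b_n$ and $\lambda_n$ in closed form and reindexing.

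First I would establish the closed forms for all production-matrix entries, not merely for the $6\times 6$ truncation displayed. Using the bivariate generating function $e^{xy}\bigl(Z(x)+yA(x)\bigr)$ of the production matrix of an exponential Riordan array, together with the values $A(x)=(1+2rx)(1+2(r+1)x)=1+(4r+2)x+4r(r+1)x^2$ and $Z(x)=r(1+2(r+1)x)=r+2r(r+1)x$ recorded in the Proposition, I would extract $p_{n,k}=n!\,[x^n y^k]\,e^{xy}(Z(x)+yA(x))$. Matching powers of $y$ forces $k\in\{n-1,n,n+1\}$, giving exactly three nonzero bands: the superdiagonal entries are all $1$, the diagonal entries are $b_n=r+n(4r+2)$, and the subdiagonal entry in row $m$ is $\lambda_m=2m(2m-1)\,r(r+1)$. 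These closed forms reproduce the displayed truncation and, crucially, hold for every $n$.

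With $b_n$ and $\lambda_n$ in hand, I would substitute into the recurrence above and shift the index $n\mapsto n-1$ to obtain
$$P_n(x;r)=\bigl(x-(r+(n-1)(4r+2))\bigr)P_{n-1}(x;r)-2(n-1)(2n-3)\,r(r+1)\,P_{n-2}(x;r),$$
which is the claimed relation, since $\lambda_{n-1}=2(n-1)(2(n-1)-1)r(r+1)=(n-1)(2n-3)\cdot 2r(r+1)$. The base cases are immediate: $P_0(x;r)=1$, and because $b_0=r$ we get $P_1(x;r)=x-r$.

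I expect the only genuine work to be the second step, namely confirming that the $A$- and $Z$-sequences yield exactly these quadratic-in-$n$ band entries for all $n$ rather than merely matching the first few rows; once the generating-function extraction is carried out, the tridiagonal shape (already guaranteed by the Proposition) and the production-matrix-to-recurrence correspondence make the remainder a routine reindexing.
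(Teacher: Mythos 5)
Your proposal is correct and follows essentially the same route as the paper, which presents this corollary as an immediate consequence of the preceding proposition: the tridiagonal production matrix of $\left[\frac{1}{1+r(1-e^{2x})},\frac{e^{2x}-1}{2(1+r(1-e^{2x}))}\right]$ yields the three-term recurrence by the standard moment-matrix/orthogonal-polynomial dictionary. Your extraction of the closed-form band entries $b_n=r+n(4r+2)$ and $\lambda_n=2n(2n-1)r(r+1)$ from $e^{xy}(Z(x)+yA(x))$ is accurate and makes explicit a verification the paper leaves implicit in its displayed $6\times 6$ truncation.
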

We note that for $r=0$, we get the moment matrix $\left[1, \frac{1}{2}(e^{2x}-1)\right]$ \seqnum{A075497} of scaled Stirling numbers of the second kind.

\section{\'Etude III}
For this section, we consider the generating function
$$\frac{1-3x-(r-2)x^2}{(1-x)(1-2r-2rx^2)}=1+x^2 \frac{r(1-2x)}{(1-x)(1-2x-2rx^2)}.$$ This expands to give a sequence that begins
$$1, 0, r, r, r(2r + 1), r(6r + 1), r(4r^2 + 14r + 1), r(20r^2 + 30r + 1), \ldots.$$
For $r=1$, this gives the sequence
$$ 1, 0, 1, 1, 3, 7, 19, 51, 139, 379, 1035,\ldots,$$ where the sequence \seqnum{A052948} which begins
$$ 1, 1, 3, 7, 19, 51, 139, 379, 1035,\ldots. $$ This is related to the descent polytopes \cite{descent}.
The inverse binomial transform gives the sequence that begins
$$1, -1, r + 1, - (2r+1), (r + 1)(2r + 1), - (2r + 1)^2, (r + 1)(2r + 1)^2, - (2r + 1)^3, (r + 1)·(2·r + 1)^3,\ldots.$$ For $r=1$, we get the sequence that begins
$$1, -1, 2, -3, 6, -9, 18, -27, 54, -81, 162,\ldots.$$ the absolute value of this sequence \seqnum{A038754} counts all paths of length $n$, starting at the initial node on the path graph $P_5$. It is an eigen-sequence of the matrix that begins
$$\left(
\begin{array}{cccccccc}
 1 & 0 & 0 & 0 & 0 & 0 & 0 & 0 \\
 1 & 1 & 0 & 0 & 0 & 0 & 0 & 0 \\
 1 & 0 & 1 & 0 & 0 & 0 & 0 & 0 \\
 1 & 0 & 1 & 1 & 0 & 0 & 0 & 0 \\
 1 & 0 & 1 & 0 & 1 & 0 & 0 & 0 \\
 1 & 0 & 1 & 0 & 1 & 1 & 0 & 0 \\
 1 & 0 & 1 & 0 & 1 & 0 & 1 & 0 \\
 1 & 0 & 1 & 0 & 1 & 0 & 1 & 1 \\
\end{array}
\right).$$
The image of $ 1, -1, 2, -3, 6, -9, 18, -27,\ldots$ by this matrix is the sequence
$$1, 0, 3, 0, 9, 0, 27, 0, 81, 0, 243,\ldots.$$

\begin{proposition} We have
$$\mathcal{P}\left(\frac{1-3x-(r-2)x^2}{(1-x)(1-2r-2rx^2)}\right)= \frac{1}{\sqrt{1+2r(1-e^z)}}.$$
\end{proposition}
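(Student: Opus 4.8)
The plan is to verify the identity by running the pipeline \emph{backwards}, exactly as in the proof of the analogous \'Etude II proposition. By the general inversion formula proved above, if $F(z)=\mathcal{P}(g(x))$ then
$$\tilde{g}(t)=e^{t-\mathrm{Rev}\left(\int_0^t F(z)\,dz\right)},$$
and the original $g(x)$ is recovered as the Sumudu transform of $\tilde{g}(t)$. Accordingly I would set $F(z)=\frac{1}{\sqrt{1+2r-2re^z}}$ and show that the resulting $\tilde g$ has Sumudu transform equal to the stated generating function.

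First I would evaluate $\int_0^t F(z)\,dz$. Writing $a=\sqrt{1+2r}$ and substituting $s=\sqrt{a^2-2re^z}$ (so $s=1$ at $z=0$), the integrand collapses to a constant multiple of $\frac{ds}{a^2-s^2}$, and the definite integral becomes $-\frac{1}{a}\ln\frac{(a+s)(a-1)}{(a-s)(a+1)}$ with $s=\sqrt{a^2-2re^t}$. This is the step that makes everything tractable: the square-root shape of $F$ is precisely what turns the antiderivative into an invertible elementary function.

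Next I would perform the reversion, solving $\int_0^z F=x$ for the upper limit. Exponentiating and solving the resulting linear equation for $s$ expresses $s$, and hence $e^z=\frac{a^2-s^2}{2r}$, as an explicit function of $K:=\frac{a+1}{a-1}e^{-ax}$. Substituting into $\tilde g(t)=e^{t-z(t)}$ and simplifying, I expect the three exponential contributions to reorganize into
$$\tilde g(t)=\frac{e^t}{1+2r}\Bigl(r+(1+r)\cosh(a t)-a\,\sinh(a t)\Bigr),$$
the exact \'Etude II analogue. I anticipate this simplification to be the main obstacle: one must track the constant of integration and the branch of the logarithm with care, and recognize that the unwieldy reverted expression collapses to this clean hyperbolic combination.

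Finally I would factor $1-2x-2rx^2=(1-(1+a)x)(1-(1-a)x)$, so that the exponents $1$, $1+a$, $1-a$ occurring in $\tilde g$ are exactly the reciprocal poles of $g$. Applying the Sumudu transform termwise, via $e^{\lambda t}\mapsto\frac{1}{1-\lambda x}$, and recombining over the common denominator $(1-x)(1-2x-2rx^2)$ should return the numerator $1-3x-(r-2)x^2$, completing the proof. As a cross-check one can specialize to $r=1$, where $\tilde g(t)=\frac{e^t}{3}(1+2\cosh(\sqrt3\,t)-\sqrt3\,\sinh(\sqrt3\,t))$ and the opening terms $1,0,1,1,3,7,\ldots$ are recovered directly.
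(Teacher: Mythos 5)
Your reverse-pipeline argument is exactly the method the paper uses to prove the parallel proposition in \'Etude II (the paper itself states this \'Etude III proposition without proof), and your computations check out: with $a=\sqrt{1+2r}$ the reversion does yield $\tilde g(t)=\frac{e^{t}}{1+2r}\bigl(r+(1+r)\cosh(at)-a\sinh(at)\bigr)$, and since $(1-(1+a)x)(1-(1-a)x)=1-2x-2rx^2$ the termwise Sumudu transform recombines to $\frac{(1+2r)\left(1-3x-(r-2)x^2\right)}{(1+2r)(1-x)(1-2x-2rx^2)}$ as required. Your calculation also confirms that the factor $(1-2r-2rx^2)$ printed in the statement is a typo for $(1-2x-2rx^2)$, consistent with the identity displayed at the start of the section.
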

The coefficient array for the polynomial family defined by $\frac{1}{\sqrt{1+2r(1-e^z)}}$ begins
$$\left(
\begin{array}{cccccccc}
 1 & 0 & 0 & 0 & 0 & 0 & 0 & 0 \\
 0 & 1 & 0 & 0 & 0 & 0 & 0 & 0 \\
 0 & 1 & 3 & 0 & 0 & 0 & 0 & 0 \\
 0 & 1 & 9 & 15 & 0 & 0 & 0 & 0 \\
 0 & 1 & 21 & 90 & 105 & 0 & 0 & 0 \\
 0 & 1 & 45 & 375 & 1050 & 945 & 0 & 0 \\
 0 & 1 & 93 & 1350 & 6825 & 14175 & 10395 & 0 \\
 0 & 1 & 189 & 4515 & 36750 & 132300 & 218295 & 135135 \\
\end{array}
\right).$$
It is related to the Galton matrix of the previous section by its $(n,k)$-term being that of the former divided by $2^{n-k}$. The ordinary generating function is given by
$$\mathcal{S}(r,3r,5r,\ldots; 2r+1, 2(2r+1), 3(2r+1),\ldots),$$ or equivalently,
$$\mathcal{J}(r, 5r+1, 9r+2, \ldots; r(2r+1), 2\cdot 3 r(2r+1), 3\cdot 5 r(2r+1),\ldots).$$
In the Del\'eham notation, the triangle is given by
$$[0, 1, 0, 2, 0, 3, \ldots]\, \Delta\, [1, 2, 3, 4,5, \ldots].$$
This is \seqnum{A211608}.
Reverting
$$\int_0^z \frac{1}{\sqrt{1+2r(1-e^t)}}\,dt,$$ we obtain the generating function
$$ -\frac{\sqrt{2 r+1} \left(\left(-r+\sqrt{2 r+1}-1\right) e^{-\sqrt{2 r+1} x}+r\right)}{\left(-r+\sqrt{2
   r+1}-1\right) e^{-\sqrt{2 r+1} x}-r} $$
of the signed Andr\'e triangle, which begins
$$\left(
\begin{array}{ccccccccc}
 1 & 0 & 0 & 0 & 0 & 0 & 0 & 0 & 0 \\
 0 & 1 & 0 & 0 & 0 & 0 & 0 & 0 & 0 \\
 0 & -1 & 0 & 0 & 0 & 0 & 0 & 0 & 0 \\
 0 & 1 & -1 & 0 & 0 & 0 & 0 & 0 & 0 \\
 0 & -1 & 4 & 0 & 0 & 0 & 0 & 0 & 0 \\
 0 & 1 & -11 & 4 & 0 & 0 & 0 & 0 & 0 \\
 0 & -1 & 26 & -34 & 0 & 0 & 0 & 0 & 0 \\
 0 & 1 & -57 & 180 & -34 & 0 & 0 & 0 & 0 \\
 0 & -1 & 120 & -768 & 496 & 0 & 0 & 0 & 0 \\
\end{array}
\right).$$ Note that the absolute value of this triangle (essentially \seqnum{A094503}) then has generating function
$$\frac{\sqrt{1-2 r} \left(r-\left(r+\sqrt{1-2 r}-1\right) e^{\sqrt{1-2 r} x}\right)}{\left(r+\sqrt{1-2
   r}-1\right) e^{\sqrt{1-2 r} x}+r}.$$
The ``unstretched'' version of this is the triangle \seqnum{A096078} that begins
$$\left(
\begin{array}{ccccccc}
 1 & 0 & 0 & 0 & 0 & 0 & 0 \\
 1 & 1 & 0 & 0 & 0 & 0 & 0 \\
 1 & 4 & 4 & 0 & 0 & 0 & 0 \\
 1 & 11 & 34 & 34 & 0 & 0 & 0 \\
 1 & 26 & 180 & 496 & 496 & 0 & 0 \\
 1 & 57 & 768 & 4288 & 11056 & 11056 & 0 \\
 1 & 120 & 2904 & 28768 & 141584 & 349504 & 349504 \\
\end{array}
\right).$$
This is defined by
$$T_{n,k}=(k+1)T_{n-1,k}+(n-k+1)T_{n,k-1}.$$ The diagonal elements are the reduced tangent numbers \seqnum{A002105}.

\begin{proposition} We have, for $r \ne 0$, that
$$\mathcal{P}\left(\frac{1-3x-(r-2)x^2}{(1-x)(1-2x-2rx^2)}\right)=\frac{1}{\sqrt{1+2r(1-e^z)}}$$ is the generating function of the  moment sequence for the family of orthogonal polynomials whose coefficient array is given by the exponential Riordan array
$$\left[\frac{1}{\sqrt{1+2rz}}, \ln\left(\frac{1+z(1+2r)}{1+2rz}\right)\right].$$ These moments appear as the initial column in the inverse array
$$\left[ \frac{1}{\sqrt{1+2r(1-e^z)}},\frac{e^z-1}{1-2r(1-e^z)}\right].$$
\end{proposition}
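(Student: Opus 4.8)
The plan is to follow the template of the two preceding propositions verbatim: I identify $[g,f]$ with the stated inverse (moment) array, show that its production matrix is tridiagonal, and conclude that $[g,f]^{-1}$ is the coefficient array of a family of orthogonal polynomials whose moments occupy the initial column of $[g,f]$. That the moment generating function equals $\frac{1}{\sqrt{1+2r(1-e^z)}}$ is precisely the content of the preceding proposition, so nothing further is needed for the $\mathcal{P}$ half of the statement; the work is entirely in the Riordan-array bookkeeping.

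First I would set $$[g,f]=\left[\frac{1}{\sqrt{1+2r(1-e^z)}},\ \frac{e^z-1}{1+2r(1-e^z)}\right],$$ the denominator of the $f$-part matching that of $g$. Writing $\phi=\bar{f}$, I would verify directly that $$\phi(z)=\ln\left(\frac{1+(1+2r)z}{1+2rz}\right),$$ so that $e^{\phi(z)}=\frac{1+(1+2r)z}{1+2rz}$. From this one reads off the two identities $$e^{\phi(z)}-1=\frac{z}{1+2rz},\qquad 1+2r-2re^{\phi(z)}=\frac{1}{1+2rz},$$ the second being the pivotal cancellation $(1+2r)(1+2rz)-2r(1+(1+2r)z)=1$ on which everything turns. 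Using the Riordan inversion formula $[g,f]^{-1}=[1/(g\circ\bar{f}),\bar{f}]$, these also give $1/(g\circ\bar{f})=\frac{1}{\sqrt{1+2rz}}$, confirming that $[g,f]^{-1}$ is exactly the coefficient array $\left[\frac{1}{\sqrt{1+2rz}},\ln\left(\frac{1+z(1+2r)}{1+2rz}\right)\right]$ named in the statement.

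Next I would compute the two structural series. Since $f(\phi(z))=z$ forces $f'(\phi(z))=1/\phi'(z)$, and $\phi'(z)=\frac{1}{(1+2rz)(1+(2r+1)z)}$, I obtain $$A(z)=f'(\bar{f}(z))=(1+2rz)(1+(2r+1)z),$$ a quadratic. For the second series, the logarithmic derivative of $g$ is $\frac{g'(z)}{g(z)}=\frac{re^z}{1+2r-2re^z}$ (the square root contributes only a factor $\tfrac12$ that the chain rule absorbs); evaluating at $z=\phi(z)$ and invoking the pivotal identity collapses this to $$Z(z)=\frac{g'(\bar{f}(z))}{g(\bar{f}(z))}=r(1+(2r+1)z),$$ which is linear. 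As in the earlier propositions, the production matrix of $[g,f]$ then has bivariate generating function $$e^{zy}\bigl(r(1+(2r+1)z)+y(1+2rz)(1+(2r+1)z)\bigr);$$ because $\deg A\le 2$ and $\deg Z\le 1$ this matrix is tridiagonal, and for $r\ne 0$ its subdiagonal entries (each proportional to $r(2r+1)$, in fact $r(2r+1)n(2n-1)$) are nonzero, so $[g,f]^{-1}$ is genuinely the coefficient array of an orthogonal polynomial family with the initial column of $[g,f]$ as its moment sequence.

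The only real obstacle is computational care: one must invert $f$ correctly and then push the composition with $\phi=\ln\left(\frac{1+(1+2r)z}{1+2rz}\right)$ through the square-root expression for $g$ without slip, everything hinging on the single cancellation above. I would also flag that, for the two arrays to be mutually inverse, the $f$-part of the displayed inverse array should read $\frac{e^z-1}{1+2r(1-e^z)}$ rather than $\frac{e^z-1}{1-2r(1-e^z)}$: only the former is the compositional inverse of $\ln\left(\frac{1+(1+2r)z}{1+2rz}\right)$, as the computation of $\bar{f}$ makes plain.
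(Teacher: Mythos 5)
Your proposal is correct and follows essentially the same route as the paper's own proof: identify $[g,f]$ with the moment array, compute $A(z)=f'(\bar{f}(z))=(1+2rz)(1+(2r+1)z)$ and $Z(z)=r(1+(2r+1)z)$, and conclude from the tridiagonal production matrix that $[g,f]^{-1}$ is an orthogonal-polynomial coefficient array; you simply supply more of the intermediate computation. Your flag about the $f$-part of the inverse array is also well taken: solving $e^w=\frac{1+(1+2r)z}{1+2rz}$ for $z$ gives $\frac{e^w-1}{1+2r(1-e^w)}$, so the denominator in the displayed inverse array should indeed read $1+2r(1-e^z)$ (consistent with the analogous arrays in \'Etudes I and II), and the paper's $1-2r(1-e^z)$ is a sign slip.
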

\begin{proof} Let $[g, f]=\left[ \frac{1}{\sqrt{1+2r(1-e^z)}},\frac{e^z-1}{1-2r(1-e^z)}\right]$.
We find that $$A(z)=f'(\bar{f}(z))=(1+2rx)(1+(2r+1)z),$$ and
$$Z(z)=\frac{g'(\bar{f}(z))}{g(\bar{f}(z))}=r(1+(1+2r)z).$$
Thus the production matrix of $[g,f]$ is tri-diagonal and hence $[g,f]^{-1}$ is the coefficient array of a family of orthogonal polynomials.
\end{proof}
The production matrix has generating function
$$e^{zy}(r(1+(1+2r)z)+y((1+2rz)(1+(2r+1)z))).$$ It begins
$$\left(
\begin{array}{ccccc}
 r & 1 & 0 & 0 & 0 \\
 r (2 r+1) & 5 r+1 & 1 & 0 & 0 \\
 0 & 6r(2r+1) & 9 r+2 & 1 & 0 \\
 0 & 0 & 15r(2r+1) & 13 r+3 & 1 \\
 0 & 0 & 0 & 28r(2r+1) & 17r+4 \\
\end{array}
\right).$$
\begin{corollary} $\mathcal{P}\left(\frac{1-3x-(r-2)x^2}{(1-x)(1-2x-2rx^2)}\right)=\frac{1}{\sqrt{1+2r(1-e^x)}}$ is the generating function of the moment sequence for the family of ortohgonal polynomials $P_n(x;r)$ that satisfy the three-term recurrence
$$P_n(x;r)=(x-(r+(n-1)(4r+1))P_{n-1}(x;r)-(n-1)(r(2r+1)+(n-2)2r(2r+1))P_{n-2}(x;r),$$ with
$P_0(x;r)=1$ and $P_1(x;r)=x-r$.
\end{corollary}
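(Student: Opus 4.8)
The plan is to read the three-term recurrence directly off the tridiagonal production matrix displayed after the preceding Proposition, using the standard dictionary between the production matrix of an exponential Riordan array and the recurrence coefficients of the orthogonal polynomials it encodes. By that Proposition, $[g,f]^{-1}$ is the coefficient array of the monic family $P_n(x;r)$, and its production matrix $P=(p_{n,k})$ is tridiagonal with bivariate generating function
$$e^{zy}\bigl(Z(z)+y\,A(z)\bigr)=e^{zy}\bigl(r(1+(1+2r)z)+y(1+2rz)(1+(2r+1)z)\bigr).$$
Tridiagonality forces a recurrence of the shape $P_n(x;r)=(x-p_{n-1,n-1})P_{n-1}(x;r)-p_{n-1,n-2}P_{n-2}(x;r)$, so the task reduces to computing the general diagonal entry $p_{n,n}$ and subdiagonal entry $p_{n,n-1}$ and matching them to the stated coefficients.

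First I would extract closed forms for these entries from the generating function. Expanding $A(z)=1+(4r+1)z+2r(2r+1)z^2$ and $Z(z)=r+r(2r+1)z$ and using $e^{zy}=\sum_m z^m y^m/m!$, the coefficient of $\frac{z^n}{n!}\,y^k$ receives contributions only from the finitely many monomials present in $Z(z)+y\,A(z)$. Picking out $y^n$ gives $p_{n,n}=r+n(4r+1)=(4n+1)r+n$, and picking out $y^{n-1}$ gives $p_{n,n-1}=n\,r(2r+1)+n(n-1)\,2r(2r+1)=n(2n-1)\,r(2r+1)$; both agree with the explicit rows shown in the Proposition.

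Next I would substitute the shifted indices into the recurrence template. The diagonal coefficient is $p_{n-1,n-1}=(4(n-1)+1)r+(n-1)=r+(n-1)(4r+1)$, which is precisely the factor appearing in the statement. The subdiagonal coefficient is $p_{n-1,n-2}=(n-1)(2(n-1)-1)\,r(2r+1)=(n-1)(2n-3)\,r(2r+1)$; writing $2n-3=1+2(n-2)$ then recovers the stated form $(n-1)\bigl(r(2r+1)+(n-2)\,2r(2r+1)\bigr)$. The base cases follow from $p_{0,0}=Z(0)=r$, giving $P_0(x;r)=1$ and $P_1(x;r)=x-r$.

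The only non-routine step is the coefficient extraction producing the general entries $p_{n,n}$ and $p_{n,n-1}$, since the Proposition exhibits only the top-left corner of $P$. This is manageable because $Z$ has two terms and $A$ has three, so each entry is a sum of at most two contributions; everything else is index bookkeeping and the elementary rewriting of $2n-3$.
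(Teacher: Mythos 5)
Your proof is correct and follows exactly the route the paper intends: the corollary is stated without proof as an immediate consequence of the preceding Proposition, with the recurrence coefficients read off the tridiagonal production matrix via the standard dictionary $P_n=(x-p_{n-1,n-1})P_{n-1}-p_{n-1,n-2}P_{n-2}$. Your coefficient extraction from $e^{zy}(Z(z)+yA(z))$ correctly yields $p_{n,n}=r+n(4r+1)$ and $p_{n,n-1}=n(2n-1)r(2r+1)$, consistent with the displayed matrix, and the index shift and the rewriting $2n-3=1+2(n-2)$ match the stated form.
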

We note that the sequence generated by $\frac{1}{\sqrt{1+2rz}}$ which begins
$$1, -r, 3r^2, - 15r^3, 105r^4, - 945r^5, 10395r^6,\ldots$$ is the moment sequence for the orthogonal polynomials with coefficient array
$$\left[\frac{1}{\sqrt{1-2rz}}, \frac{z}{1-2rz}\right]=\left[\frac{1}{\sqrt{1+2rz}}, \frac{z}{1+2rz}\right]^{-1}.$$
See \seqnum{A176230}.

\section{Acknowledgements} This note makes reference to many sequences to be found in the OEIS, which at the time of writing contains more than $300,000$ sequences. All who work in the area of integer sequences are profoundly indebted to Neil Sloane. 

Many of the sequences in this note are related to simplicial objects such as the associahedron and the permutahedron. Indeed, the $\mathcal{T}$ transform provides an enumerative link between these two objects, while the $\mathcal{P}$ pipeline brings these two objects back to more basic objects. 

The comments of Tom Copeland \cite{blog} and Peter Bala in the relevant OEIS entries have been very useful in this context.

\bigskip
\hrule
\bigskip
\noindent 2010 {\it Mathematics Subject Classification}: Primary
11B83; Secondary 33C45, 42C05, 15B36, 11C20, 05A15, 05E45, 	44A10
\noindent \emph{Keywords:} Sumudu transform, series reversion, Narayana triangle, Eulerian triangle, Riordan array, orthogonal polynomial, associahedron, permutahedron, stellahedron.

\bigskip
\hrule
\bigskip
\noindent (Concerned with sequences
\seqnum{A000045},
\seqnum{A000108},
\seqnum{A000670},
\seqnum{A000957},
\seqnum{A001263},
\seqnum{A001519},
\seqnum{A002105},
\seqnum{A003688},
\seqnum{A004123},
\seqnum{A008292},
\seqnum{A019538},
\seqnum{A028246},
\seqnum{A032033},
\seqnum{A033282},
\seqnum{A038754},
\seqnum{A046802},
\seqnum{A048993},
\seqnum{A052948},
\seqnum{A060693},
\seqnum{A074059},
\seqnum{A075497},
\seqnum{A078008},
\seqnum{A086810},
\seqnum{A090181},
\seqnum{A090582},
\seqnum{A094416},
\seqnum{A094417},
\seqnum{A094418},
\seqnum{A094503},
\seqnum{A096078},
\seqnum{A100754},
\seqnum{A123125},
\seqnum{A126216},
\seqnum{A130850},
\seqnum{A131198},
\seqnum{A133494},
\seqnum{A151575},
\seqnum{A173018},
\seqnum{A176230},
\seqnum{A211402},
\seqnum{A211608},
\seqnum{A248727}, and
\seqnum{A271697}.)

\end{document}